\documentclass[11pt,a4paper]{amsart}

\usepackage[T1]{fontenc}
\usepackage[utf8]{inputenc}
\usepackage[margin=1in]{geometry}

\usepackage{nicematrix}
\usepackage{amsfonts,amssymb,amsthm,amsmath,amstext}
\usepackage{dsfont} 
\usepackage{mathtools}
\usepackage{colonequals}

\usepackage[shortlabels]{enumitem}
\usepackage[skip=1ex]{caption}
\usepackage{tikz}
\usepackage{tikz-cd}
\usepackage{xcolor} 

\usepackage{csquotes}
\usepackage[
  style=alphabetic, 
  backend=biber, 
  sorting=nyt, 
  doi=false, 
  isbn=false,
  url=false, 
  hyperref,
  backref,
  backrefstyle=none, 
  maxnames=20, 
  maxalphanames=20
]{biblatex}
\newbibmacro{string+doiurlisbn}[1]{%
  \iffieldundef{doi}{%
    \iffieldundef{url}{%
      \iffieldundef{isbn}{%
        \iffieldundef{issn}{%
          #1%
        }{%
          \href{https://books.google.com/books?vid=ISSN\thefield{issn}}{#1}%
        }%
      }{%
        \href{https://books.google.com/books?vid=ISBN\thefield{isbn}}{#1}%
      }%
    }{%
      \href{\thefield{url}}{#1}%
    }%
  }{%
    \href{https://dx.doi.org/\thefield{doi}}{#1}%
  }%
}
\DeclareFieldFormat{title}{\usebibmacro{string+doiurlisbn}{\mkbibemph{#1}}}
\DeclareFieldFormat[article,incollection,inproceedings,unpublished,misc,book]{title}%
    {\usebibmacro{string+doiurlisbn}{\mkbibquote{#1}}}
\DefineBibliographyStrings{english}{%
  backrefpage = {$\uparrow$},%
  backrefpages = {$\uparrow$}%
}

\usepackage[pdfencoding=auto, psdextra, breaklinks=true]{hyperref}
\definecolor{mylinkcolor}{rgb}{0.5,0.0,0.0}
\definecolor{myurlcolor}{rgb}{0.0,0.0,0.7}
\hypersetup{
  colorlinks,
  urlcolor=myurlcolor,
  citecolor=myurlcolor,
  linkcolor=mylinkcolor,
  breaklinks=true,
}
\usepackage{xurl}

\numberwithin{equation}{section}

\usepackage[capitalize]{cleveref}

\usepackage{aliascnt}
\newcommand{\sharedthm}[2]{%
  \newaliascnt{#1}{equation}%
  \newtheorem{#1}[#1]{#2}%
  \aliascntresetthe{#1}}
\theoremstyle{plain}
\sharedthm{thm}{Theorem}
\sharedthm{prop}{Proposition}
\sharedthm{lem}{Lemma}
\sharedthm{cor}{Corollary}
\sharedthm{conj}{Conjecture}

\theoremstyle{definition}
\sharedthm{defn}{Definition}
\sharedthm{ex}{Example}
\sharedthm{remark}{Remark}
\sharedthm{question}{Question}
\sharedthm{algorithm}{Algorithm}

\crefalias{thm}{theorem}
\crefalias{prop}{proposition}
\crefalias{lem}{lemma}
\crefalias{cor}{corollary}
\crefalias{conj}{conjecture}
\crefalias{defn}{definition}
\crefalias{ex}{example}
\crefalias{remark}{remark}
\crefalias{question}{question}
\crefalias{algorithm}{algorithm}

\crefname{theorem}{Theorem}{Theorems}
\Crefname{theorem}{Theorem}{Theorems}

\crefname{proposition}{Proposition}{Propositions}
\Crefname{proposition}{Proposition}{Propositions}

\crefname{lemma}{Lemma}{Lemmas}
\Crefname{lemma}{Lemma}{Lemmas}

\crefname{corollary}{Corollary}{Corollaries}
\Crefname{corollary}{Corollary}{Corollaries}

\crefname{conjecture}{Conjecture}{Conjectures}
\Crefname{conjecture}{Conjecture}{Conjectures}

\crefname{definition}{Definition}{Definitions}
\Crefname{definition}{Definition}{Definitions}

\crefname{example}{Example}{Examples}
\Crefname{example}{Example}{Examples}

\crefname{remark}{Remark}{Remarks}
\Crefname{remark}{Remark}{Remarks}

\crefname{question}{Question}{Questions}
\Crefname{question}{Question}{Questions}

\crefname{algorithm}{Algorithm}{Algorithms}
\Crefname{algorithm}{Algorithm}{Algorithms}

\crefname{enumi}{step}{steps}
\Crefname{enumi}{Step}{Steps}
\crefname{hypothesis}{Hypothesis}{Hypotheses}
\Crefname{hypothesis}{Hypothesis}{Hypotheses}

\DeclareMathOperator{\Gal}{Gal}
\DeclareMathOperator{\Jac}{Jac}

\DeclareMathOperator{\Frob}{Frob}

\DeclareMathOperator{\Hom}{Hom}

\DeclareMathOperator{\Norm}{Nm}

\DeclareMathOperator{\res}{res}
\DeclareMathOperator{\GL}{GL}
\DeclareMathOperator{\GSp}{GSp}

\DeclareMathOperator{\SL}{SL}
\DeclareMathOperator{\PSL}{PSL}
\DeclareMathOperator{\PGL}{PGL}

\DeclareMathOperator{\tr}{tr}

\DeclareMathOperator{\divisor}{div}
\DeclareMathOperator{\Res}{Res}
\DeclareMathOperator{\Ind}{Ind}
\DeclareMathOperator{\Art}{Art}
\DeclareMathOperator{\Ver}{Ver}

\newcommand{\ab}{\mathrm{ab}}
\newcommand{\Gab}{G^\ab}

\newcommand{\A}{\mathbf{A}}
\newcommand{\C}{\mathbb{C}}
\newcommand{\F}{\mathbb{F}}

\newcommand{\Q}{\mathbb{Q}}
\newcommand{\R}{\mathbb{R}}
\newcommand{\Z}{\mathbb{Z}}

\newcommand{\Fbar}{\overline{\F}}
\newcommand{\Qbar}{\overline{\Q}}
\newcommand{\Kbar}{\overline{K}}
\newcommand{\OK}{\mathcal{O}_K}

\newcommand{\legendre}[2]{\ensuremath{\left( \frac{#1}{#2} \right)}}

\newcommand{\gl}[2]{\operatorname{GL}_{#1}(#2)}
\newcommand{\Fell}{\F_\ell}
\newcommand{\Fellbar}{\Fbar_\ell}
\newcommand{\pp}{\mathfrak{p}}
\newcommand{\ppbar}{\overline{\pp}}

\newcommand{\lam}{\lambda}
\newcommand{\lambar}{\overline{\lambda}}
\newcommand{\rhoell}{\rho_\ell}
\newcommand{\rhoellK}{\rho_\ell|_{G_K}}
\newcommand{\rhoellKss}{\rho_\ell|_{G_K}^{ss}}
\newcommand{\Id}{\mathrm{Id}}
\newcommand{\PP}{\mathbb{P}}

\title[On reducible $\ell$-torsion representations of Picard curves]
{On reducible $\ell$-torsion representations of Picard curves}

\author{Xiyao Chen}
\address{Nanyang Technological University, Singapore 639798}
\email{xiyaochen2002@gmail.com}

\author{Shiva Chidambaram}
\address{University of Wisconsin-Madison, Madison, WI 53706 USA.}
\email{chidambaram3@wisc.edu}

\author{Andrew Coursin}
\address{}
\email{coursin.andrew@gmail.com}

\author{Erik Reed}
\address{}
\email{erik.reed1983@gmail.com}

\author{Leo Yang}
\address{University of Wisconsin-Madison, Madison, WI 53706 USA.}
\email{yxj555595@gmail.com}

\subjclass[2020]{Primary 11F80; 11G10, 14K15}

\date{}

\begin{document}
 
\maketitle

\begin{abstract}
Let $C/\mathbb{Q}$ be a Picard curve and let $\ell\equiv1\pmod3$ be a 
prime such that the $\lambda$-torsion representation of the Jacobian 
of $C$ is reducible for a prime $\lambda$ of $\mathbb{Q}(\zeta_3)$ 
above $\ell$. We describe the constraints imposed by complex 
conjugation on the $2$-dimensional subquotient $\rho$, in 
particular, showing that if it is absolutely irreducible, then up 
to twist it descends to $\mathbb{Q}$.
We then give criteria for ruling out the irreducible case, using
Dickson's classification of subgroups of $\GL_2(\Fell)$, under the 
natural hypothesis that the characteristic polynomials of all elements
in the image of $\rho$ split.
We also develop algorithms to compute the set of $1$-dimensional 
Jordan--H\"older constitutents of the $\ell$-torsion representation 
over $\mathbb{Q}(\zeta_3)$, and Frobenius characteristic polynomials 
of the $2$-dimensional subquotient $\rho$.
We apply these methods to $381$ Picard curves with possibly
reducible $7$-torsion representations, and to five corresponding
$13$-torsion examples. Under the reducibility and splitting
hypotheses, we rule out an irreducible $2$-dimensional subquotient 
in all of the examples considered, showing that the semisimplification
of the $\ell$-torsion representation is a direct sum of $6$ characters.
\end{abstract}

\section{Introduction}
\label{sec:intro}

Let $K=\Q(\zeta_3)$, where $\zeta_3$ is a primitive cube root of
unity.  In this paper we study $\ell$-torsion Galois representations arising
from the Jacobians of Picard curves
\[
    C\colon y^3=f_4(x)
\]
defined over $\Q$, where $f_4$ is a separable quartic polynomial.
The map $(x,y)\mapsto (x,\zeta_3y)$ defines an order-$3$ automorphism
of $C$ and gives an action of $\Z[\zeta_3]$ on $\Jac(C)$ over $K$.  If
$\ell\equiv1\pmod 3$ is a rational prime, then $\ell=\lambda\overline{\lambda}$ in
$\mathcal O_K$, and the restriction to $G_K$ of the $\ell$-torsion
representation of $\Jac(C)$ decomposes into two $3$-dimensional
representations on the $\lambda$- and $\overline{\lambda}$-torsion.

We are interested in the case where the $\lambda$-torsion
representation $\rho_\lambda$ is reducible.  After interchanging
$\lambda$ and $\overline{\lambda}$ if necessary, we may suppose that
$\rho_\lambda$ admits a character $\varepsilon$ as a subrepresentation, and write
$\rho$ for the $2$-dimensional quotient. The main purpose of the paper is to
develop theoretical and computational tools for 
accessing the Frobenius characteristic polynomials of $\rho$
and determining whether
$\rho$ is irreducible, and, more generally, for recovering the
Jordan--H\"older constituents of the full $\ell$-torsion representation.

While we know in general that the $\lam$- and $\lambar$-torsions are symplectic duals with respect to the Weil pairing on $\Jac(C)[\ell]$, the fact that $C$ is defined over $\Q$ imposes strong additional
constraints.  Complex conjugation interchanges the
$\lambda$- and $\overline{\lambda}$-torsion representations.  If
$\rho$ is irreducible, this gives
\[
    \rho_\lam \simeq \varepsilon \oplus \rho,
    \qquad \text{and} \qquad 
    \varepsilon\varepsilon^c=\chi_\ell,
    \qquad
    \rho^c\simeq\chi_\ell\otimes\rho^{-T},
\]
where $\varepsilon^c, \rho^c$ denote the conjugate representations of 
$\varepsilon$ and $\rho$ respectively (see \cref{prop:detrho_conjugatedual}).
We show in \cref{prop:descent} that the latter relation has a useful 
descent consequence, if $\rho$ is furthermore absolutely irreducible:
up to twisting by a character of $G_K$, the representation $\rho$ 
descends to one over $\Q$. The parity of this descended representation 
is intrinsic to $\rho$ and can be described in terms of its projective 
splitting field (see \cref{prop:cm}).

We also obtain strong restrictions on the image of an irreducible
$\rho$ under an additional splitting hypothesis. Suppose that the
characteristic polynomial of every element of $\rho(G_K)$ splits over
$\Fell$. Then under mild conditions, we show in \cref{prop:inducedorsixchars} that the projective image is dihedral.  It
follows that $\rho$ is induced from a quadratic extension $K'/K$, and moreover $K'$ is biquadratic over $\Q$.
This gives an effective test for ruling out the irreducible case.

We apply these methods to the Picard curves 
identified in \cite{CG26} as possibly having a
reducible $\lam$-torsion representation for a prime $\lam$ of $K$ above a rational prime $\ell \equiv 1 \pmod 3$. 
This includes $381$ curves for $\ell=7$ and $5$ curves for $\ell=13$.
For every curve considered, the Euler factors over $K$ in our computed range split 
completely modulo $7$, providing strong evidence for the splitting 
hypothesis. Our main result is:

\begin{thm}[\cref{thm:ell7_complete_decomposition}+\cref{thm:ell13_complete_decomposition}]
\label{thm:main_result}
Let $C$ be one of the Picard curves mentioned above with $\ell=7$ or $13$ accordingly. 
Suppose that the $\lam$-torsion representation $\rho_\lam$ of the Jacobian of $C$ is indeed reducible for a prime $\lam$ of $K$ above $\ell$, and the splitting modulo $\ell$ of Euler factors over $K$ persists at every prime of good reduction.
Then $\rhoellKss$ is a direct sum of six characters.
\end{thm}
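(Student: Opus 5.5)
The plan is to reduce the statement to showing that the $2$-dimensional quotient $\rho$ of $\rho_\lam$ is reducible, and then to assemble the Jordan--H\"older constituents. Under the reducibility hypothesis we have a character $\varepsilon \subset \rho_\lam$ (after possibly swapping $\lam$ and $\lambar$) with quotient $\rho$. By the Weil pairing, $\rho_{\lambar}$ is the symplectic dual twisted by $\chi_\ell$; by complex conjugation, $\rho_{\lambar}\simeq\rho_\lam^c$. So once $\rho^{ss}$ is a sum of two characters, both $\rho_\lam^{ss}$ and $\rho_{\lambar}^{ss}$ are sums of three characters, and $\rhoellKss=\rho_\lam^{ss}\oplus\rho_{\lambar}^{ss}$ is a sum of six characters. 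Everything therefore hinges on excluding an irreducible $\rho$, curve by curve.

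\textbf{Case 1: $\rho$ irreducible but not absolutely irreducible.} Then the image of $\rho$ lies in a nonsplit Cartan subgroup. Its elements outside the scalars have irreducible characteristic polynomials over $\Fell$. This contradicts the splitting hypothesis unless the image is scalar, and a scalar image is impossible for an irreducible $\rho$.

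\textbf{Case 2: $\rho$ absolutely irreducible.} Here I would invoke \cref{prop:inducedorsixchars}, after checking its mild conditions, presumably that $\ell$ does not divide the order of the projective image, or that the image is not exceptional. Under the splitting hypothesis this forces the projective image to be dihedral. Hence $\rho\simeq\Ind_{K'}^{K}\psi$ for a quadratic extension $K'/K$ which, by \cref{prop:descent} and \cref{prop:cm}, is biquadratic over $\Q$. The descent also forces $K'$ to be unramified outside $\ell$ and the bad primes of $C$, which leaves a finite, explicit list of candidates $K'=K(\sqrt d)$ with $d$ supported on those primes. For each candidate I would use the algorithm computing Frobenius characteristic polynomials of $\rho$ (obtained from the Euler factor at $\mathfrak p$ modulo $\lam$, divided by $(x-\varepsilon(\Frob_\pp))$, using the computed $1$-dimensional constituents). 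I would then look for a good prime $\pp$ of $K$ that is inert in $K'/K$ with $\tr\rho(\Frob_\pp)\neq 0$. An induced representation has zero trace on the non-trivial coset, so one such prime rules out that $K'$. Running this over all candidates for all $381$ curves at $\ell=7$ and the five curves at $\ell=13$ excludes Case 2.

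\textbf{Main obstacle.} I expect the hardest part to be twofold. First, verifying the hypotheses of \cref{prop:inducedorsixchars}: we must exclude projective images $A_4,S_4,A_5$ and images containing $\SL_2(\Fell)$. The large images contain elements with irreducible characteristic polynomials, so they are excluded by splitting. For the exceptional groups, elements of order $3$ or $4$ in $\PGL_2(\Fell)$ have split characteristic polynomial only under congruence conditions on $\ell$, which need case checking for $\ell=7,13$. Second, identifying $\varepsilon$ correctly per curve, so that the traces of $\rho$ are computed from the right factor. When the Euler factor has repeated roots modulo $\lam$, the choice is ambiguous. There I would use the relation $\varepsilon\varepsilon^c=\chi_\ell$ and the conductor bounds to pin down $\varepsilon$, trying all consistent choices if necessary.
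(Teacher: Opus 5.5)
Your reduction to excluding an irreducible $\rho$ is fine, and so is your treatment of the case where $\rho$ is irreducible but not absolutely irreducible. The gaps are in Case 2.

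\textbf{Verifying the hypotheses of \cref{prop:inducedorsixchars}.} For $\ell=13$ this cannot be done by congruence conditions. Since $13\equiv 1\pmod{12}$, the group $\SL_2(\F_3)\subset\SL_2(\F_{13})$ is irreducible, has projective image $A_4$, and every element has split characteristic polynomial (\cref{rem:ell7mod12}). So the splitting hypothesis alone never rules out an $A_4$ image. The paper instead uses the computed $\det\rho$ and traces to exhibit Frobenius elements of projective order $6$. These are read off from $\tr(\rho(\Frob_\pp))^2/\det(\rho(\Frob_\pp))$, which is insensitive to the sign ambiguity in the trace. It then applies \cref{lem:projorder}. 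Even for $\ell=7$, elements of order $3$ or $4$ are the wrong place to look: order-$3$ elements of $\PGL_2(\F_7)$ are split, so $A_4$ survives that check. What excludes the exceptional images there is the Klein four argument of \cref{lem:kleinfour}, which uses that $-1$ is a non-square mod $7$.

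\textbf{Eliminating the candidate fields $K'$.} You assert without justification that the inert-prime test eliminates every candidate $K'$ for every curve. This is exactly where the direct computation fails. For $39$ of the $\ell=7$ curves, some $d\in\{3,7,21\}$ survives however many primes are used. The reason is that the four characters other than $\varepsilon,\varepsilon^c$ satisfy $\chi_2=\chi_1\eta$ and $\chi_4=\chi_3\eta$, with $\eta$ the character of $K'/K$. In that situation the genuine (reducible) $\rho^{ss}$ may well be $\chi\oplus\chi\eta\cong\Ind_{K'}^K(\chi|_{G_{K'}})$. Its trace vanishes at \emph{every} prime inert in $K'$, so no test confined to inert primes, yours included, can ever remove $K'$.

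The missing idea is to use primes split in $K'/K$:
\begin{enumerate}
\item Rerun the character search of \cref{alg:find_varepsilon} over $K'$.
\item Observe that every candidate character of $G_{K'}$ is the restriction of a character $\chi$ of $G_K$.
\item Conclude that $\psi=\chi|_{G_{K'}}$, so $\rho\cong\Ind_{K'}^K\psi\cong\chi\oplus\chi\eta$, contradicting irreducibility.
\end{enumerate}

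\textbf{Your trace recipe.} Your way of computing $\tr\rho(\Frob_\pp)$ does not work as written. Reducing the integral Euler factor modulo $\lam$ gives all six eigenvalues, not the cubic for $\rho_\lam$. Moreover, Euler factors alone cannot separate $\rho$ from $\rho^c$ (\cref{rem:twodetchars}). This is why \cref{alg:tracetest} only asks whether $f_p/\bigl((T-\varepsilon(\pp))(T-\varepsilon^c(\pp))\bigr)$ is even.
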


For two of the $381$ curves with $\ell=7$, we also verified the result directly in \cref{ex:C3C4_fullydecomposedrepresentation} by exhibiting two independent $7$-torsion points over $K$. This is done by exhibiting a $\Q$-rational $7$-torsion point, and acting on it by the order-$3$ automorphism.

Although the computations ultimately point toward complete
reducibility rather than toward new irreducible representations,
the methods developed here also let us computationally access and study an irreducible $2$-dimensional subquotient $\rho$. In particular, we give effective procedures for
identifying the blocks of $1$-dimensional constituents of $\rhoellKss$ (\cref{alg:find_varepsilon}),
the determinant character of $\rho$ (\cref{alg:find_det}),
and for computing Frobenius traces under $\rho$ (\cref{alg:find_trace_1,alg:find_trace_2}).
We hope that these algorithms will be useful for future works, especially for finding the cuspidal Bianchi eigenforms predicted by Serre's modularity conjecture.

The paper is organized as follows.
In \cref{sec:prelim} we fix notation and conventions.
In \cref{sec:reps} we study the representations arising from Picard
curves and the constraints imposed by complex conjugation, including
the descent result for an irreducible $2$-dimensional constituent.
In \cref{sec:tracetest} we use Dickson's classification
of subgroups of $\GL_2(\Fell)$ to study the $2$-dimensional
constituent closely, in particular obtaining a criterion for 
ruling out this irreducible case.
In \cref{sec:algorithms} we give algorithms for identifying
one-dimensional constituents and for recovering the
determinant and Frobenius traces of the complementary
$2$-dimensional subquotient. Finally, in \cref{sec:results} we
apply these algorithms to the possibly reducible $\ell$-torsion 
datasets for $\ell=7$ and $13$,
proving \cref{thm:ell7_complete_decomposition} and
\cref{thm:ell13_complete_decomposition}.
\cref{sec:further_questions} discusses some further questions.

Magma \cite{Magma} implementations of our algorithms, and collected 
data are available in the GitHub repository \cite{ReducibleRepsPicardGithub}.

\subsection{Acknowledgments}
 
We thank the University of Wisconsin--Madison and the Madison
Experimental Mathematics Lab.
We thank Ivan Aidun and Eshaan Bhansali for their help in coordination.
We also thank John Cremona and Tobias Berger for helpful discussions.

SC was partially supported by NSF grant DMS-2301386.

\section{Notation and conventions}
\label{sec:prelim}

For a number field $K$, we write $G_K=\Gal(\overline K/K)$ for 
its absolute Galois group. All Galois representations, i.e., 
representations of $G_K$ are understood to be continuous. If 
$\pp$ is a finite prime of $K$ at which a 
representation is unramified, we write $\Frob_\pp$ for an 
arithmetic Frobenius element. If $K/\Q$ is Galois, let 
$c \in \Gal(K/\Q)$ denote a nontrivial element, as well as 
a fixed lift of it to $G_\Q$ when no confusion can arise.
If $\rho:G_K\to\GL_n(\F)$ is a representation, the Galois 
conjugate $\rho^c$ of $\rho$ by $c$ is defined by
\[
    \rho^c(g)=\rho(cgc^{-1}),\qquad g\in G_K.
\]
Its isomorphism class is independent of the chosen lift of $c$.
For a character $\varepsilon:G_K\to\F^\times$, the character
$\varepsilon^c$ itself is independent of this choice. If
$\pp$ and $\ppbar=c(\pp)$ are two primes of $K$ above 
a rational prime $p$, and the character $\varepsilon$ is 
unramified at both $\pp$ and $\ppbar$, then $c \Frob_\pp c^{-1}$ is conjugate 
to $\Frob_{\ppbar}$, so we have:
\[
    \varepsilon^c(\Frob_\pp)
      =\varepsilon(\Frob_{\ppbar}).
\]
We freely identify finite-order characters of $G_K$ with the
corresponding finite-order Hecke characters of $K$, or 
equivalently ray class characters, via global class field 
theory.

Throughout this article, $K=\Q(\zeta_3)$, and $c$ denotes the 
nontrivial element of $\Gal(K/\Q)$, as well as a fixed lift of 
it to $G_\Q$ representing complex conjugation. We refer to the 
Galois conjugate by $c$ simply as the Galois conjugate 
representation.

\section{Reducible representations of Picard Curves}
\label{sec:reps}
 
Let $C/\Q$ be a genus-$3$ curve of the form $y^3 = f_4(x)$ for a
separable quartic polynomial $f_4$. The curve has an order-$3$
automorphism $[\zeta_3]: (x,y) \mapsto (x,\zeta_3 y)$ over
$K=\Q(\zeta_3)$, so that the geometric endomorphism ring of its Jacobian
$A$ contains $\Z[\zeta_3]$. Let $\ell \equiv 1 \pmod 3$ be a rational
prime $\ell$. Since $\ell$ splits in $\OK=\Z[\zeta_3]$ as $\lambda
\lambar$, the $\ell$-torsion subgroup $A[\ell]$ decomposes as a direct
sum $A[\lambda] \oplus A[\lambar]$, where the components $A[\lambda]$
and $A[\lambar]$ are both maximal isotropic subgroups. Hence, over $K$,
the associated Galois representation $\rhoellK := \rho_{A,\ell}|_{G_K} \colon
G_K \to \GSp(6,\Fell)$ coming from the Galois action on $A[\ell]$, takes
the form
\begin{align}
\label{eq:rhoell_decomp}
    \begin{pNiceArray}{cc}
    \rho_\lam & \mathbf{0} \\
    \mathbf{0} & \chi_\ell \rho_\lam^{-T}
    \end{pNiceArray},
\end{align}
where $\rho_\lam : G_K \rightarrow \GL_3(\Fell)$ is the $3$-dimensional
Galois representation on $\lam$-torsion $A[\lam]$, $\chi_\ell$ is the
mod-$\ell$ cyclotomic character taking values in $\Fell^\times$, and
$(\cdot)^{-T}$ denotes the inverse transpose. The fact that
$\rho_{\lambar}$ is isomorphic to $\chi_\ell \rho_\lam^{-T}$ comes from
identifying $A[\lambar]$ with the dual of $A[\lam]$ through the Weil
pairing, and using Galois equivariance of the Weil pairing.

We will consider primes $\ell$ for which the $3$-dimensional 
representation $\rho_\lam$ is reducible.
By switching the role of $\lam$ and $\lambar$, if necessary, we may
assume that $\rho_\lam$ has a $1$-dimensional subrepresentation. That
is, $\rho_\lam$ has the shape:
\begin{align}
\label{eq:reducible_rholam}
\rho_\lam = \begin{pNiceArray}{c|cc}
\varepsilon & * & * \\
\hline
0 & \Block{2-2}<\Large>{\rho} \\
0 & \\
\end{pNiceArray},
\end{align}
where $\varepsilon$ is a character $G_K \to \Fell^\times$ and $\rho
\colon G_K \to \GL_2(\Fell)$ is the complementary $2$-dimensional 
quotient representation. In this situation, we have that the
semisimplification of $\rho_\ell|_{G_K}$ is $\rhoellKss =
\varepsilon \oplus \rho^{ss} \oplus (\chi_\ell \otimes \rho^{-T})^{ss} \oplus
(\chi_\ell \otimes \varepsilon^{-1})$.

We first study some general properties of the $2$-dimensional
representation $\rho$, assuming it is irreducible. We start with a basic
lemma relating $2$-dimensional representations and their duals.
\begin{lem}
  \label{lem:gl2dual}
  Let $\rho$ be any $\GL_2$-valued representation. Then $\rho^{-T} \cong
  (\det\rho)^{-1} \otimes \rho$.
\end{lem}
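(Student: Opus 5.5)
The plan is to exhibit an explicit matrix $J$, independent of the group element, that conjugates $\rho^{-T}$ into $(\det\rho)^{-1}\rho$. The whole argument reduces to a single $2\times 2$ identity. Take
\[
  J=\begin{pmatrix}0&1\\-1&0\end{pmatrix},
\]
the standard symplectic form on the underlying $2$-dimensional space. For any $M=\begin{pmatrix}a&b\\c&d\end{pmatrix}\in\GL_2$ over any field (or commutative ring), the adjugate formula gives
\[
  M^{-1}=(\det M)^{-1}\begin{pmatrix}d&-b\\-c&a\end{pmatrix}.
\]
Hence
\[
  M^{-T}=(\det M)^{-1}\begin{pmatrix}d&-c\\-b&a\end{pmatrix}.
\]
A direct multiplication shows $\begin{pmatrix}d&-c\\-b&a\end{pmatrix}=JMJ^{-1}$. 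This is just the statement $M^TJM=(\det M)J$, i.e.\ $\GL_2=\GSp_2$ with multiplier equal to the determinant.

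Applying this identity with $M=\rho(g)$ for each $g$ in the group gives
\[
  \rho(g)^{-T}=(\det\rho(g))^{-1}\,J\rho(g)J^{-1}.
\]
Since $J$ does not depend on $g$, conjugation by $J$ is an isomorphism of representations between $(\det\rho)^{-1}\otimes\rho$ and $\rho^{-T}$. Continuity, or any other structure on the representation, is unaffected by conjugating with a fixed matrix.

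There is no real obstacle here. The only step requiring care is the sign and transpose bookkeeping in verifying $JMJ^{-1}=\begin{pmatrix}d&-c\\-b&a\end{pmatrix}$, which is a one-line computation. One should also note that the argument uses nothing about irreducibility or the coefficient field.
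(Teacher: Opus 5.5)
Your proof is correct and is essentially the paper's argument: the paper uses the same matrix $J$ and the identity $A^TJA=\det(A)J$, i.e.\ $A^{-T}=\det(A)^{-1}JAJ^{-1}$, to show that $J$ intertwines $\rho^{-T}$ with $(\det\rho)^{-1}\rho$. The paper also mentions the coordinate-free version $V\cong V^\vee\otimes\det V$ via the wedge pairing, which your explicit computation realizes in coordinates.
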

\begin{proof}
The wedge pairing on a rank-$2$ module $V$ is alternating and perfect,
so $V \cong \Hom(V, \textstyle\bigwedge^2 V) = V^\vee \otimes \det V$
equivariantly.
Concretely, with $J = \left(\begin{smallmatrix}
0&1\\-1&0\end{smallmatrix} \right)$
one can check that for all $A \in \GL_2$,
\begin{align*}
  A^T J A = \det(A) J, \quad \text{i.e., } A^{-T} = \det(A)^{-1} J A
  J^{-1}
\end{align*}
So the matrix $J$ intertwines $\rho^{-T}$ with $(\det\rho)^{-1}\rho$.
\end{proof}

We fix an element $c \in G_\Q$ representing complex conjugation. Then
$c$ restricts to the non-trivial automorphism in $\Gal(K/\Q)$. Moreover
$\rho_\ell(c)$ interchanges the two $3$-dimensional
$G_K$-representations $\rho_\lam$ and $\rho_{\lambar}$. This places 
severe restrictions on $\rho$ and $\rho_\ell$.

\begin{prop}
  \label{prop:detrho_conjugatedual}
  Let $C\colon y^3 = f_4(x)$ be a Picard curve over $\Q$ and $\ell
  \equiv 1 \pmod{3}$ a prime which splits as $\lam\lambar$ in $K =
  \Q(\zeta_3)$.
  Suppose that the $3$-dimensional subrepresentation $\rho_\lambda$ of
  $\rho_\ell|_{G_K}$ is reducible, admitting a $1$-dimensional 
  subrepresentation given by the character $\varepsilon:
  G_K \rightarrow \Fell^\times$ such that
  the corresponding $2$-dimensional quotient $\rho$ is irreducible. Then
  \begin{align}
    \label{eq:varepsilon_and_det}
    \varepsilon \varepsilon^c = \chi_\ell \qquad \text{and } \qquad \det(\rho) \det(\rho)^c = \chi_\ell^2
  \end{align}
  and furthermore, we have the direct sum decompositions:
  \begin{align}
    \label{eq:full_decomposition}
      \rho_\lam \simeq \varepsilon \oplus \rho \qquad \text{and } \qquad
      \rho_{\lambar} = \varepsilon^c \oplus \rho^c \simeq \chi_\ell \varepsilon^{-1} \oplus \chi_\ell \otimes \rho^{-T}.
  \end{align}
\end{prop}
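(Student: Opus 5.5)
Complex conjugation $c\in G_\Q$ acts on $A[\ell]$ and swaps $A[\lam]$ and $A[\lambar]$. This gives $\rho_{\lambar}\simeq\rho_\lam^c$ as $G_K$-representations: for $v\in A[\lam]$ and $g\in G_K$ we have $g(cv)=c\,(c^{-1}gc)v$, so $c\colon A[\lam]\to A[\lambar]$ intertwines $\rho_\lam^{c}$ (conjugation by $c^{-1}$, which is equivalent since $c^2$ lies in $G_K$ and acts through $\rho_\lam$) with $\rho_{\lambar}$. On the other hand, the Weil pairing gives $\rho_{\lambar}\simeq\chi_\ell\rho_\lam^{-T}$. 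Since $\rho_\lam$ is an extension of $\rho$ by $\varepsilon$, we get
\[
\rho_\lam^c\ \text{is an extension of }\rho^c\text{ by }\varepsilon^c,\qquad \chi_\ell\rho_\lam^{-T}\ \text{is an extension of }\chi_\ell\varepsilon^{-1}\text{ by }\chi_\ell\rho^{-T}.
\]
That is, $\rho_{\lambar}$ has a $1$-dimensional sub $\varepsilon^c$ and also a $2$-dimensional sub $\chi_\ell\rho^{-T}$, which is irreducible because $\rho$ is.

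The key step is to compare these two filtrations of the same $3$-dimensional space $V=A[\lambar]$. Let $L\subset V$ be the line realizing $\varepsilon^c$ and $W\subset V$ the plane realizing $\chi_\ell\rho^{-T}$.

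If $L\subset W$, then $L$ would be a $G_K$-stable line in the irreducible $W$, which is impossible. Hence $L\cap W=0$, and $V=L\oplus W$ as $G_K$-representations. This is the only genuinely nontrivial point. The argument uses irreducibility of $\rho$ essentially, via irreducibility of its twisted dual.

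Comparing quotients, $V/W\simeq L$ gives $\chi_\ell\varepsilon^{-1}\simeq\varepsilon^c$, that is, $\varepsilon\varepsilon^c=\chi_\ell$. Similarly $V/L\simeq W$ gives $\rho^c\simeq\chi_\ell\rho^{-T}$, which yields the second isomorphism in \eqref{eq:full_decomposition}.

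To get $\rho_\lam\simeq\varepsilon\oplus\rho$, transport the splitting back by applying $c$ (equivalently, conjugating by $c$ again, using $(\rho^c)^c\simeq\rho$). Then $\rho_\lam\simeq\rho_{\lambar}^c$ is a direct sum of $\varepsilon$ and $\rho$.

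For the determinant identity, take determinants in $\rho^c\simeq\chi_\ell\rho^{-T}$, which gives $\det(\rho)^c=\chi_\ell^2\det(\rho)^{-1}$. One could also invoke \cref{lem:gl2dual} to rewrite this as $\rho^c\simeq\chi_\ell(\det\rho)^{-1}\rho$, but it is not needed.
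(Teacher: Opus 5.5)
Your proposal is correct and is essentially the paper's argument: inside $A[\lambar]$ you realize $\varepsilon^c$ as a line and $\chi_\ell\otimes\rho^{-T}$ as an irreducible plane, conclude that they meet trivially, and transport the splitting back to $\rho_\lam$ via $c$. The only difference is the order of steps: the paper first gets $\varepsilon\varepsilon^c=\chi_\ell$ and $\rho^c\simeq\chi_\ell\otimes\rho^{-T}$ by comparing semisimplifications, whereas you read them off from the quotients $V/W$ and $V/L$ after proving the splitting, which removes the need for a separate Jordan--H\"older comparison.
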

\begin{proof}
  Since $\rho_\ell$ is a $G_\Q$-representation, the element 
  $\rho_\ell(c)$ interchanges the two $3$-dimensional 
  $G_K$-representations $\rho_\lam$ and $\rho_{\lambar}$, giving an 
  isomorphism between $\rho_{\lambar}$ and the Galois conjugate
  $\rho_\lam^c$ of $\rho_\lam$:
  \begin{align}
  \label{eq:isom_conjrep_dualrep}
      \rho_\lam^c \simeq \rho_{\lambar} \simeq \chi_\ell \otimes
      \rho_\lam^{-T}.
  \end{align}

  Since $\rho$ is irreducible, we know by definition that 
  $\rho_\lam^{ss} \simeq \varepsilon \oplus \rho$. 
  By taking semisimplifications of \cref{eq:isom_conjrep_dualrep}, we 
  get that $\varepsilon^c \oplus \rho^c
  \simeq \chi_\ell \varepsilon^{-1} \oplus \chi_\ell \otimes \rho^{-T}$.
  Since $\rho$ is irreducible, this means that 
  $\varepsilon^c \simeq \chi_\ell \varepsilon^{-1}$ and 
  $\rho^c \simeq \chi_\ell \otimes \rho^{-T}$. Taking determinants leads 
  to $\det(\rho)^c = \det(\rho^c) = \chi_\ell^2 \det(\rho^{-T}) =
  \chi_\ell^2 \det(\rho)^{-1}$.
  
  Now the fact that $\varepsilon$ is a $1$-dimensional
  \emph{sub}representation of $\rho_\lam$, implies that $\varepsilon^c$
  is also a $1$-dimensional \emph{sub}representation of
  $\rho_{\lambar}$. Let us call this space $U$. Since $\rho_{\lambar}
  \simeq \chi_\ell \otimes (\rho_\lam)^{-T}$ is the symplectic dual with
  respect to the Weil pairing, the $2$-dimensional representation
  corresponding to the dual $\chi_\ell \otimes \rho^{-T}$ of $\rho$ is
  again a \emph{sub}representation of $\rho_{\lambar}$. Let us call this
  space $W$. Since $\rho$ is assumed to be irreducible, we deduce that
  $W$ is also irreducible, and thus $W \cap U = 0$. This shows that
  $\rho_{\lambar} \simeq \rho_{\lambar}^{ss}$. By conjugation we deduce
  the same for $\rho_\lam$ and hence $\rho_\lam \simeq \varepsilon
  \oplus \rho$. This proves the decompositions of $\rho_\lam$ and 
  $\rho_{\lambar}$.
\end{proof}

\begin{cor}
\label{cor:completereducibility}
If $\varepsilon \varepsilon^c \ne \chi_\ell$,
then $\rhoellKss$ decomposes completely as a direct sum of
six characters. In particular, $\rhoellK$ contains no
irreducible $2$-dimensional subquotient.
\end{cor}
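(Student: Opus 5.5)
This is the contrapositive of \cref{prop:detrho_conjugatedual}, followed by a short bookkeeping of semisimplifications. Throughout, we stay in the setting of the proposition: $\rho_\lam$ has the $1$-dimensional subrepresentation $\varepsilon$ with $2$-dimensional quotient $\rho$. Recall that
\[
\rhoellKss \simeq \varepsilon \oplus \rho^{ss} \oplus (\chi_\ell\otimes\rho^{-T})^{ss} \oplus \chi_\ell\varepsilon^{-1}.
\]

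First, suppose $\varepsilon\varepsilon^c \neq \chi_\ell$. If $\rho$ were irreducible, all hypotheses of \cref{prop:detrho_conjugatedual} would hold, and \cref{eq:varepsilon_and_det} would give $\varepsilon\varepsilon^c=\chi_\ell$, a contradiction. Hence $\rho$ is reducible over $\Fell$.

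Next, since $\rho$ is a reducible $2$-dimensional representation over $\Fell$, it has a $G_K$-stable line. So $\rho^{ss}\simeq \psi_1\oplus\psi_2$ for characters $\psi_i\colon G_K\to\Fell^\times$ (the characters of the line and of the quotient).

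Then dualise. The semisimplification commutes with taking $\chi_\ell\otimes(\cdot)^{-T}$, because the inverse transpose of an upper triangular matrix is again triangular with inverted diagonal. Equivalently, apply \cref{lem:gl2dual} to get $\rho^{-T}\cong(\det\rho)^{-1}\otimes\rho$. Either way,
\[
(\chi_\ell\otimes\rho^{-T})^{ss}\simeq \chi_\ell\psi_1^{-1}\oplus\chi_\ell\psi_2^{-1}.
\]
Substituting into the displayed formula writes $\rhoellKss$ as the direct sum of the six characters
\[
\varepsilon,\ \psi_1,\ \psi_2,\ \chi_\ell\psi_1^{-1},\ \chi_\ell\psi_2^{-1},\ \chi_\ell\varepsilon^{-1}.
\]

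Finally, any irreducible subquotient of $\rhoellK$ is a Jordan--Hölder constituent, so it appears in $\rhoellKss$ and is therefore $1$-dimensional. This gives the final assertion that there is no irreducible $2$-dimensional subquotient.

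There is no real obstacle here. The only point needing care is that "irreducible" means irreducible over $\Fell$, matching the hypothesis of the proposition. The characters $\psi_i$ then take values in $\Fell^\times$, because a reducible $\rho$ over $\Fell$ has an $\Fell$-rational stable line.
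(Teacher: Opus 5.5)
Your proof is correct and follows the paper's argument: the contrapositive of \cref{prop:detrho_conjugatedual} forces $\rho$ to be reducible, after which every constituent of $\rhoellKss$ is a character. The only cosmetic difference is that you get the $\rho_{\lambar}$ half from the Weil-pairing dual $\chi_\ell\otimes\rho^{-T}$, whereas the paper writes it as $(\rho_\lam^{ss})^c$; both routes work equally well.
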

\begin{proof}
If $\varepsilon\varepsilon^c \ne \chi_\ell$, then $\rho$ cannot be 
irreducible, by \cref{prop:detrho_conjugatedual}. This means that 
$\rho_\lam^{ss}$ decomposes completely into a direct sum of $3$ 
characters. Since $\rhoellKss = \rho_\lam^{ss} 
\oplus \rho_{\lambar}^{ss} \simeq \rho_\lam^{ss} 
\oplus (\rho_\lam^{ss})^c$ by \cref{eq:rhoell_decomp}, we deduce that
$\rhoellKss$ is a direct sum of six characters.
\end{proof}

Using \cref{lem:gl2dual}, the isomorphism 
$\rho^c \simeq \chi_\ell \otimes \rho^{-T}$ appearing in the proof of 
\cref{prop:detrho_conjugatedual} becomes a twist condition.
\begin{equation}
  \label{eq:hyp}
  \rho^c \cong \chi_\ell \otimes \rho^{-T} \cong \psi \otimes \rho,
  \qquad
  \text{where } \psi := \chi_\ell(\det\rho)^{-1}.
\end{equation}
In particular, the projective representation $\PP\rho$ is invariant 
under Galois conjugation, i.e., $(\PP\rho)^c \cong
\PP\rho$.

\begin{prop}
\label{prop:descent}
  Let $\rho: G_K \rightarrow \GL_2(\F_\ell)$ be absolutely irreducible
  and satisfy the twist relation in \eqref{eq:hyp}. Then there exists 
  a continuous representation $\tilde\rho : G_\Q \to \GL_2(\Fbar_\ell)$ 
  and a finite-order character $\eta$ of $G_K$ such that 
  $\rho \cong \eta \otimes \tilde\rho|_{G_K}$.
  Moreover, the projective representation $\PP(\tilde\rho)$ produced 
  by the above twist relation, and its parity, are independent of all 
  choices.
\end{prop}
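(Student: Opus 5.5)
The plan is to work projectively and then lift via Tate's theorem. Let $\bar\rho := \PP\rho \colon G_K \to \PGL_2(\Fellbar)$. By \eqref{eq:hyp} we have $\bar\rho^c \cong \bar\rho$. Concretely, there is $A \in \GL_2(\Fellbar)$ with
\[
  \rho(cgc^{-1}) = \psi(g)\, A\,\rho(g)\,A^{-1} \quad \text{for all } g \in G_K .
\]
Absolute irreducibility and Schur's lemma make $A$ unique up to scalars, so its image $\bar A \in \PGL_2(\Fellbar)$ is canonical.

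\textbf{Step 1 (extend projectively).} Define $\tilde{\bar\rho} \colon G_\Q \to \PGL_2(\Fellbar)$ by $\tilde{\bar\rho}|_{G_K} = \bar\rho$ and $\tilde{\bar\rho}(c) := \bar A\,\bar\rho(c^{2})$; on the coset $cG_K$ set $\tilde{\bar\rho}(cg) := \tilde{\bar\rho}(c)\bar\rho(g)$. To check this is a homomorphism, we need two facts:
\begin{itemize}
\item[(i)] conjugation by $\tilde{\bar\rho}(c)$ realizes $g \mapsto cgc^{-1}$ on $\bar\rho(G_K)$, which holds by the defining relation;
\item[(ii)] $\tilde{\bar\rho}(c)^2 = \bar\rho(c^2)$.
\end{itemize}
For (ii), apply the relation twice: conjugation by $\bar A^2$ agrees with conjugation by $\bar\rho(c^2)$ on $\bar\rho(G_K)$. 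Since the centralizer of an absolutely irreducible image in $\PGL_2$ is not automatically trivial (dihedral images have centralizer of order $2$), the scalar/normalization must be chosen with care. This is the main obstacle. I would handle it by viewing the problem as extending $\bar\rho$ along the index-$2$ extension $G_K \subset G_\Q$. The obstruction lies in a class in $H^2(\Z/2, Z)$, where $Z$ is the centralizer of $\bar\rho(G_K)$ in $\PGL_2(\Fellbar)$. When $Z$ is trivial (non-dihedral projective image), it vanishes. In the dihedral case, I would show directly that $\rho \cong \Ind_{K'}^{K}\theta$ and use $c$-stability of the dihedral data (possibly after replacing $c$ by $c\cdot g$ for a suitable $g \in G_K$, which only changes $\bar A$ by elements of $Z$) to fix $\bar A$ with $\bar A^2 = \bar\rho(c^2)$. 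Continuity is automatic, since $G_K$ is open.

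\textbf{Step 2 (lift and compare).} By Tate's theorem ($H^2(G_\Q,\Q/\Z)=0$, applied with $\Fellbar^\times$ coefficients), $\tilde{\bar\rho}$ lifts to a continuous $\tilde\rho \colon G_\Q \to \GL_2(\Fellbar)$. Then $\tilde\rho|_{G_K}$ and $\rho$ have the same projectivization. Hence $\rho(g) = \eta(g)\tilde\rho(g)$ for a function $\eta$, which is a continuous character because the scalars are central. Since both representations have finite image, $\eta$ has finite order.

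\textbf{Step 3 (independence).} Any other choice of $A$ differs by a scalar, giving the same $\bar A$. Any other lift of $c$ is $cg$ with $g \in G_K$, and it produces the same extension $\tilde{\bar\rho}$ because of the cocycle relation in Step 1. So $\PP\tilde\rho$ is determined by $\rho$, up to the ambiguity in $Z$ discussed above; in the dihedral case I would argue that the two candidate extensions differ by the quadratic character of $K/\Q$, which becomes trivial projectively after conjugation.

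Finally, parity is defined projectively: it records whether $\PP\tilde\rho(c)$ is trivial, equivalently whether $\det\tilde\rho(c) = \pm 1$ up to squares. It is insensitive to changing the lift $\tilde\rho$ by a character $\mu$ of $G_\Q$, since $\det$ changes by $\mu(c)^2 = 1$. Hence parity depends only on $\PP\tilde\rho$.
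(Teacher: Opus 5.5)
Your overall architecture (extend $\PP\rho$ to $G_\Q$, lift by Tate's theorem, compare with $\rho$, check independence) is the paper's, and Step 2 and the core of your parity argument are fine. The gap is the existence of the projective extension. You flag it as the main obstacle but never establish it: for dihedral image you only say you would normalize $\bar A$ so that $\bar A^2=\bar\rho(c^2)$, and the mechanism you propose cannot do this. Replacing $c$ by $cg$ changes $\bar A$ to $\bar A\,\bar\rho(g)$, not by an element of $Z$. A two-line computation shows that the discrepancy $\bar A^2\bar\rho(c^2)^{-1}\in Z$ is unchanged by this move, so it is an obstruction class, not an artifact of choices. Also, $\tilde{\bar\rho}(c)$ must be $\bar A$, not $\bar A\,\bar\rho(c^2)$: conjugation by the latter realizes $g\mapsto c^3gc^{-3}$, so your (i) fails as written.

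The missing idea is that you pass to $\PGL_2$ too early, and thereby discard exactly the information that kills this obstruction. Apply the twist relation twice \emph{in $\GL_2$}:
\[
  \rho(c^2)\rho(g)\rho(c^2)^{-1}=\rho(c^2gc^{-2})=\psi^c(g)\psi(g)\,A^2\rho(g)A^{-2}.
\]
Here $\psi\psi^c=1$: take determinants in \eqref{eq:hyp} to get $\det(\rho)\det(\rho)^c=\chi_\ell^2$, and use $\chi_\ell^c=\chi_\ell$. Hence $\rho(c^2)^{-1}A^2$ commutes with $\rho(G_K)$, so it is a scalar by Schur's lemma, and $\bar A^2=\bar\rho(c^2)$ whatever the projective image. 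The paper shortens this further by taking $c$ to be an actual complex conjugation. Then $c^2=1$, $A^2=\mu I$, $G_\Q=G_K\rtimes\langle c\rangle$, and $c\mapsto[A]$ visibly extends $\PP\rho$.

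This also removes the need for your Step 3 detour. With $\psi=\chi_\ell(\det\rho)^{-1}$ fixed, $A$ is unique up to scalars, so the twist relation yields only $c\mapsto\bar A$. Your proposed resolution there is in fact false. Take dihedral projective image of order $2n\ge6$ and $1\ne z\in Z$. Then $c\mapsto\bar Az$ is a second extension, and it is not conjugate to the first: a conjugating element would lie in $Z$, which commutes with $\bar A$. It arises from the twist relation with $\psi$ replaced by $\psi\omega$, where $\omega$ is a quadratic self-twist character of $\rho$, and it can have the opposite parity. This is exactly the point of \cref{rem:UniqueProjectiveExtension} and of the self-twist hypothesis in \cref{prop:cm}.
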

\begin{proof}
Since $\rho^c \cong \psi \otimes \rho$, choose an intertwiner $A$, i.e.,
$A \in \GL_2(\F_\ell)$ with $\rho^c(g) \coloneqq \rho(c g c^{-1}) = A
\psi(g)\rho(g)A^{-1}$ for all $g \in G_K$. Applying this twice and
noting that
$\psi^c(g) \psi(g) = 1$ by \cref{eq:varepsilon_and_det}, we see that
\begin{align*}
  \rho(g) = \rho(c^2) \rho(g) \rho(c^{-2}) = A^2 \psi^c(g) \psi(g)
  \rho(g) A^{-2} = A^2 \rho(g) A^{-2} \quad \text{for all } g \in G_K.
\end{align*}
Since $\rho$ is absolutely irreducible, Schur's lemma implies that $A^2
\in \Fbar_\ell^\times$. Since $A \in \GL_2(\F_\ell)$, we get that $A^2 =
\mu$ for some $\mu \in \F_\ell^\times$. Then the projective
representation $\PP\rho$ extends to a projective representation of
$G_\Q$ by mapping $c$ to $[A] \in \PGL_2(\F_\ell)$. Since $H^2(G_\Q,
\Q/\Z) = 0$ \cite[Part II \S 6.4]{Ser77}, and $\Fbar_\ell^\times$ is the
prime-to-$\ell$ part of $\Q/\Z$, we get $H^2(G_\Q, \Fbar_\ell^\times) =
0$. So the projective $G_\Q$-representation lifts to a continuous linear
representation $\tilde\rho \colon G_\Q \to \GL_2(\Fbar_\ell)$. Finally
since $\PP(\tilde\rho|_{G_K}) = \PP\rho$, the two representations
$\tilde\rho|_{G_K}$ and $\rho$ differ by a character $\eta$. It has
finite order because it is continuous, being a ratio of continuous
representations.

Now we prove the last statement about $\PP(\tilde\rho)$ and
$\det(\tilde\rho(c))$ being uniquely determined by the twist relation. 
We need to
show that different choices of complex conjugation or the intertwiner do
not change them. A different choice of complex conjugation amounts to
replacing $c$ by $h c h^{-1}$ for some $h \in G_K$, and it is easy to
check that this changes $A$ to $\rho(h) A \rho(h)^{-1}$, thus leaving
$\PP(\tilde\rho)$
unchanged. By Schur's lemma, a different choice of the intertwiner $A$
amounts to multiplying $A$ by a scalar matrix, which
leaves the projective $G_\Q$-representation $\PP(\tilde\rho)$ unchanged.
Thus
$\PP(\tilde\rho)$
is uniquely determined by $\rho$.

Any two lifts of $\PP(\tilde\rho)$ differ by a character $\nu$ of
$G_\Q$, so $\det(\tilde\rho \nu)(c) = \det(\tilde\rho(c)) \nu(c)^2 =
\det(\tilde\rho(c))$. Therefore the parity of $\tilde\rho$ is also
uniquely determined by $\rho$.
\end{proof}

\begin{remark}
  \label{rem:UniqueProjectiveExtension}
  \cref{prop:descent} says that the projective representation 
  $\PP \tilde\rho$ is canonically determined by the isomorphism 
  $\rho^c \simeq \psi \otimes \rho$ of \cref{eq:hyp}. It does not say 
  that $\PP \tilde\rho$ is the unique extension of $\PP \rho$ to 
  $G_\Q$, since a priori $\rho$ might have non-trivial self-twists.
\end{remark}

The parity of the lifted representation $\tilde\rho \colon G_\Q \to
\GL_2(\Fbar_\ell)$ in \cref{prop:descent} can be explicitly given in
terms of the intertwiner $A$ chosen
in the proof of \cref{prop:descent}, even though it does not depend on
this choice. Since $\PP\tilde\rho(c) = [A]$, we can write $\tilde\rho(c)
= \alpha A$ for some $\alpha \in \Fbar_\ell^\times$. Then we get that $I
= \tilde\rho(c)^2 = \alpha^2 A^2$, giving the formula:
\begin{align}
\label{eq:parity}
  \det\tilde\rho(c) = \alpha^2 \det A = \frac{\det A}{\mu},
\end{align}
where $\mu \in \F_\ell^\times$ is such that $A^2 = \mu I$. Let $F
\coloneqq \Qbar^{\,\ker \PP(\tilde\rho)}$ and $L \coloneqq \Kbar^{\,\ker
\PP(\rho)}$ be the splitting fields of the projective representations
$\PP \tilde\rho$ and $\PP \rho$ respectively. Then it is known that
$\det(\tilde\rho)(c) = 1 \iff \PP\tilde\rho(c) = 1 \iff F$ is totally
real. We now describe the parity of $\tilde\rho$ purely in terms of the
splitting field $L$ of $\PP\rho$, under a mild condition on $\rho$.

\begin{prop}
  \label{prop:cm}
  Let $L \coloneqq \Kbar^{\,\ker\PP\rho}$ be the projective splitting
  field of $\rho$. Then $L$ is Galois over $\Q$ and $L = KF$, where $F
  \coloneqq \Qbar^{\,\ker \PP(\tilde\rho)}$ is the projective splitting
  field of $\tilde\rho$. 
  If $\tilde\rho$ is even, then $L$ is a CM field.
  Converse holds if $\rho$ has no non-trivial self-twists.
\end{prop}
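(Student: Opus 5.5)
The plan is to compare kernels of projective representations, using \cref{prop:descent} to relate $\PP\rho$ and $\PP\tilde\rho$. For the \emph{first claim}, since $\rho \cong \eta\otimes\tilde\rho|_{G_K}$ and twisting by a character does not change the projectivization, $\PP\rho = \PP(\tilde\rho|_{G_K}) = \PP\tilde\rho|_{G_K}$. Hence
\[
\ker\PP\rho = \ker\PP\tilde\rho \cap G_K .
\]
Both $\ker\PP\tilde\rho$ and $G_K$ are normal in $G_\Q$, so their intersection is normal in $G_\Q$, and $L$ is Galois over $\Q$. By Galois correspondence, the fixed field of an intersection of subgroups is the compositum of the fixed fields, so $L = FK$.

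\emph{Even implies CM.} Suppose $\det\tilde\rho(c)=1$. The matrix $\tilde\rho(c)$ satisfies $\tilde\rho(c)^2 = 1$, and it is diagonalizable with eigenvalues $\pm1$ since $\ell\neq 2$. Determinant $1$ forces equal eigenvalues, so $\tilde\rho(c)=\pm I$ and $\PP\tilde\rho(c)=1$. Every complex conjugation is $G_\Q$-conjugate to $c$, and $\ker\PP\tilde\rho$ is normal, so every complex conjugation acts trivially on $F$; that is, $F$ is totally real. Then $L = F(\sqrt{-3})$ is a totally imaginary quadratic extension of the totally real field $F$, since $\sqrt{-3}\notin F$. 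Hence $L$ is CM.

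\emph{CM implies even, assuming no self-twists.} This is the main step. Suppose $L$ is CM. For a CM field that is Galois over $\Q$, complex conjugation induces a central element of $\Gal(L/\Q)$. Hence for every $g\in G_K$, the elements $cgc^{-1}$ and $g$ have the same image in $\Gal(L/K)$, so
\[
\PP\rho(cgc^{-1}) = \PP\rho(g).
\]
Since $\PP\tilde\rho(c)=[A]$ with $A$ as in the proof of \cref{prop:descent}, we get $[A]\,\PP\rho(g)\,[A]^{-1} = \PP\rho(g)$ for all $g\in G_K$. Lifting to matrices, this gives
\[
A\rho(g)A^{-1} = \nu(g)\rho(g)
\]
for a function $\nu\colon G_K\to\Fbar_\ell^\times$. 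Taking the relation at a product $gh$ shows that $\nu$ is a character, and then $A$ intertwines $\rho$ with $\nu\otimes\rho$. Since $\rho$ has no non-trivial self-twists, $\nu=1$, and Schur's lemma (using absolute irreducibility) gives $A$ scalar. Therefore $\PP\tilde\rho(c)=1$, so $\tilde\rho(c)=\pm I$ and $\det\tilde\rho(c)=1$; that is, $\tilde\rho$ is even.

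The only delicate point I expect is justifying that complex conjugation is central in $\Gal(L/\Q)$ for a Galois CM field $L$. This is the standard fact that the CM involution commutes with every embedding. Once that is in hand, the self-twist hypothesis is exactly what is needed to pass from the projective commutation relation to $A$ being scalar.
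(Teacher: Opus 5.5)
Your proof is correct and follows the paper's argument. The first claim comes from the kernel comparison giving $L = KF$, and the forward direction uses total reality of $F$ together with the imaginary field $K$. For the converse you use centrality of complex conjugation in $\Gal(L/\Q)$, which the paper phrases as $\Gal(L/\Q)\cong\Gal(L/K)\times\Gal(L/L^+)$, combined with the no-self-twist hypothesis and Schur's lemma. Your explicit lifting of $[A]$ to a self-twist $\nu$ actually spells out the ``uniqueness'' step that the paper attributes to \cref{prop:descent}; by \cref{rem:UniqueProjectiveExtension}, that uniqueness only holds under the no-self-twist assumption, so your version is the more careful rendering of the same idea.
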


\begin{proof}
By \cref{prop:descent}, we know that $\PP\tilde\rho|_{G_K} \cong
\PP\rho$. Hence $\ker \PP\rho = G_K \cap \ker \PP\tilde\rho$. Since both
$G_K$ and $\ker \PP\tilde\rho$ are kernels of homomorphisms from $G_\Q$,
they are normal in $G_\Q$, which means that $\ker \PP\rho$ is normal in
$G_\Q$. Thus $L/\Q$ is Galois. Furthermore, taking fixed fields also
gives $L = KF$.

Suppose $\tilde\rho$ is even. Then $F$ being the splitting field of $\PP
\tilde\rho$ is totally real. Since $K$ is an imaginary quadratic field,
and $L = KF$, we immediately get that $L$ is a CM field with $L^+ = F$.

Conversely, suppose $L$ is CM with maximal totally real subfield $L^+$.
Since $K \subseteq L$ is imaginary we must have $L = KL^+$, and
$\Gal(L/\Q) \cong \Gal(L/K) \times \Gal(L/L^+)$. Note that $\Gal(L/K)$
is isomorphic to the image of $\PP \rho$ in $\PGL(2,\F_\ell)$ and
$\Gal(L/L^+) = \langle c \rangle$, where $c$ represents complex
conjugation on $L$. Therefore $\Pi(\sigma, c^i) \coloneqq
\PP\rho(\sigma)$ defines a homomorphism $\Gal(L/\Q) \to \PGL_2(\F_\ell)$
extending $\PP\rho$. By our assumption about self-twists of $\rho$, 
and the uniqueness shown in \cref{prop:descent}, we deduce that 
$\Pi = \PP\tilde\rho$. Therefore $\PP\tilde\rho(c) = \Pi(1,c) = 1$, 
and hence $\tilde\rho$ is even.
\end{proof}

We now show that the parity of $\tilde\rho$ is unconstrained by the
twist condition in \cref{eq:hyp}, at least as long as we allow ourselves to consider representations $\rho$ with coefficients in $\Fellbar$. We start with a lemma about the norm
map on the group of characters of $G_K$. For any field $F$, let $X_F$
denote the group of continuous characters $G_F \to \Fellbar^\times$.

\begin{lem}
  \label{lem:NormOfCharacterGroup}
  The image of the natural map $X_\Q \to X_K$ obtained by restriction is
  equal to the subgroup $X_K^c$ of conjugation-invariant characters of
  $X_K$. Furthermore, if $\Norm \colon X_K \to X_K$ is the norm map that
  sends a character $\eta : G_K \rightarrow \Fellbar$ to $\Norm(\eta)
  \coloneqq \eta \eta^c$, then $\Norm(X_K) = X_K^c$.
\end{lem}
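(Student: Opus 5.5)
The plan is to prove the two statements in turn. In both, the inclusions into $X_K^c$ are formal. The real content is that every conjugation-invariant character extends to $G_\Q$, and that every extended character is a norm.

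\emph{Step 1: the image of restriction is $X_K^c$.}
If $\eta = \nu|_{G_K}$ with $\nu \in X_\Q$, then
\[
\eta^c(g) = \nu(cgc^{-1}) = \nu(g),
\]
since $\Fellbar^\times$ is abelian, so $\eta \in X_K^c$. Conversely, let $\eta \in X_K^c$. Then $\eta$ factors through $G_K^{\mathrm{ab}}$, and $c$-invariance says that $\eta$ is trivial on the commutators $[c,G_K]$. I would extend $\eta$ to the group $G_\Q$ by a cohomological argument. The extension is an obstruction problem: $\eta$ defines an element of $H^1(G_K,\Fellbar^\times)^{\Gal(K/\Q)}$. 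The inflation--restriction sequence reads
\[
H^1(G_\Q,\Fellbar^\times) \to H^1(G_K,\Fellbar^\times)^{\Gal(K/\Q)} \to H^2(\Gal(K/\Q),\Fellbar^\times) \to H^2(G_\Q,\Fellbar^\times),
\]
where the action on the coefficients $\Fellbar^\times$ is trivial. The last arrow is inflation, and its target vanishes, as used in the proof of \cref{prop:descent} via \cite{Ser77}. So it suffices to show that $H^2(\Gal(K/\Q),\Fellbar^\times)$ itself vanishes. This group is $\Fellbar^\times/(\Fellbar^\times)^2 = 0$, because $\Fellbar^\times$ is divisible. Hence $\eta$ lies in the image of restriction.

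A more hands-on alternative is to define $\nu(c) = \alpha$, where $\alpha^2 = \eta(c^2)$, and check that this is a homomorphism. That check uses $c$-invariance, and it is essentially the same computation.

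\emph{Step 2: $\Norm(X_K) = X_K^c$.}
The inclusion $\subseteq$ holds because $(\eta\eta^c)^c = \eta^c\eta$, using $\eta^{cc} = \eta$; this last identity holds since $c^2 \in G_K$ acts by an inner automorphism. For $\supseteq$, take $\eta \in X_K^c$. By Step 1, write $\eta = \nu|_{G_K}$ with $\nu \in X_\Q$. Since $\Fellbar^\times$ is divisible and $\nu$ has finite order, choose $\nu' \in X_\Q$ with $\nu'^2 = \nu$; for instance, take a square root of a generator of the finite cyclic image. Then put $\eta' = \nu'|_{G_K}$. This character is $c$-invariant, so
\[
\Norm(\eta') = \eta'^2 = \nu'^2|_{G_K} = \eta.
\]

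\emph{Main obstacle.}
The only delicate point is the vanishing of the obstruction in Step 1. One must be careful to work with $\Fellbar^\times$-coefficients rather than $\Fell^\times$-coefficients, and to invoke $H^2(G_\Q,\Fellbar^\times) = 0$ together with the divisibility of $\Fellbar^\times$.
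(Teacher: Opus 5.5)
Your Step 1 is correct and matches the paper's argument. Only $H^2(\Gal(K/\Q),\Fellbar^\times)=\Fellbar^\times/(\Fellbar^\times)^2=0$ matters there; the vanishing of $H^2(G_\Q,\Fellbar^\times)$ plays no role.

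\textbf{The gap is in Step 2}, where you choose $\nu'\in X_\Q$ with $\nu'^2=\nu$. Divisibility of $\Fellbar^\times$ does not make $X_\Q$ divisible, because $G_\Q^{\ab}$ has torsion. Your recipe fails whenever $\nu$ has even order $m$: if $\zeta$ generates the image $\mu_m$, any square root $\zeta'$ of $\zeta$ has order $2m$, so $\zeta^k\mapsto\zeta'^k$ is not well defined. In fact a square $\nu'^2$ is trivial on every element $x$ of order $2$ in $G_\Q^{\ab}\cong\hat\Z^\times$, since $\nu'(x)^2=\nu'(x^2)=1$.

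Choosing the even extension of $\eta$ does not repair this. Since $\Norm(\eta')=\eta'^2$ for $c$-invariant $\eta'$, your construction only reaches squares of elements of $X_K^c$, and these do not exhaust $X_K^c$. For example, let $\eta$ be the quadratic character of $K(i)/K$. Its two extensions to $G_\Q$ are the characters of $\Q(i)$ and of $\Q(\sqrt3)$.
\begin{itemize}
\item The character of $\Q(i)$ is $-1$ on complex conjugation, i.e.\ on $-1\in\hat\Z^\times$.
\item The character of $\Q(\sqrt3)$ is the Dirichlet character $\legendre{12}{\cdot}$. It is $-1$ on the order-$2$ element of $\hat\Z^\times$ that is $-1$ at $3$ and $1$ elsewhere; this element reduces to $5\bmod 12$.
\end{itemize}
So neither extension is a square in $X_\Q$, hence $\eta\neq\eta'^2$ for every $\eta'\in X_K^c$. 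Any $\eta'$ with $\Norm(\eta')=\eta$ must fail to be $c$-invariant, and no argument confined to $X_K^c$ can produce it. Your ``main obstacle'' paragraph therefore places the difficulty in the wrong step.

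\textbf{The missing input is arithmetic}, and the paper obtains it from the transfer. Class field theory identifies $\Ver_{K/\Q}\colon G_\Q^{\ab}\to G_K^{\ab}$ with the map $C_\Q/C_\Q^{\circ}\to C_K/C_K^{\circ}$ induced by inclusion. Its kernel $\R^\times/\R_{>0}$ corresponds to complex conjugation, so $\ker\Ver_{K/\Q}=\langle c\rangle$. Consequently every even character of $G_\Q$ has the form $\eta'\circ\Ver_{K/\Q}$ with $\eta'\in X_K$. This is where divisibility of $\Fellbar^\times$ is legitimately used: to extend a character from the closed subgroup $\Ver_{K/\Q}(G_\Q^{\ab})$ to all of $G_K^{\ab}$.

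Given $\eta\in X_K^c$, take the even one of its two extensions; they differ by the odd character $\epsilon_K$. Write it as $\eta'\circ\Ver_{K/\Q}$. Then $\Ver_{K/\Q}(g)\equiv g\cdot cgc^{-1}$ for $g\in G_K$ gives $\eta=\eta'\eta'^c=\Norm(\eta')$.
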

\begin{proof}
Let $c \in G_\Q$ denote a fixed element representing complex
conjugation. We will denote the restriction of this element to $\Gab_\Q$
and $\Gal(K/\Q)$ also by $c$.
Consider the inflation--restriction sequence for the trivial
$G_\Q$-module $\Fellbar^\times$, and the normal subgroup $G_K
\triangleleft G_\Q$:
\begin{align}
  \label{eq:InflationRestrictionXQtoXK}
  0 \to H^1\big(\Gal(K/\Q), \Fellbar^\times\big)
    \to X_\Q
    \xrightarrow{\ \res \ } X_K^{\Gal(K/\Q)}
    \to H^2\big(\Gal(K/\Q), \Fellbar^\times\big),
\end{align}
where $X_K^{\Gal(K/\Q)} = \{\mu \in X_K : \mu^c = \mu\} = X_K^c$. We
know that $H^2(\Z/2, \Fellbar^\times) =
\Fellbar^\times/(\Fellbar^\times)^2 = 0$ and $H^1(\Z/2, \Fellbar^\times)
= \Hom(\Z/2, \Fellbar^\times) = \langle \epsilon_K \rangle,$ where
$\epsilon_K$ is the quadratic character of $G_\Q$ cutting out $K$.
Therefore \cref{eq:InflationRestrictionXQtoXK} becomes the short exact
sequence
\begin{align}
  \label{eq:KernelOfXQtoXK}
  0 \to \Hom(\Z/2, \Fellbar^\times) \to X_\Q \to X_K^c \to 0,
\end{align}
which shows the first part of the statement.

The inclusion $\Norm(X_K) \subseteq X_K^c$ is clear since $\Norm(\eta)^c
= (\eta\eta^c)^c = \eta^c\eta = \Norm(\eta)$. For proving the reverse
inclusion, we need to first consider the Verlagerung map $\Gab_\Q \to
\Gab_K$. For a number field $F$, let $C_F=\A_F^\times/F^\times$ denote
its idèle class group and $C_F^{\circ}$ its identity component. By
global class field theory, we have a commutative diagram
\begin{equation*}
\begin{tikzcd}
C_\Q/C_\Q^{\circ}
  \arrow[r, "\Art_\Q", "\sim"']
  \arrow[d, "\iota"']
&
G_\Q^\ab
  \arrow[d, "\Ver_{K/\Q}"]
\\
C_K/C_K^{\circ}
  \arrow[r, "\Art_K", "\sim"']
&
G_K^\ab .
\end{tikzcd}
\end{equation*}
where the rows are the Artin reciprocity isomorphisms, and $\iota$ is
induced by the natural $C_\Q \hookrightarrow C_K$ (see \cite[Theorem
6.13(b)]{Neu13}). Since $K$ is imaginary quadratic, the identity
component $C_K^{\circ}$ is the image of $\C^\times$ in $C_K$. Therefore
$\ker \iota = \R^\times/\R_{>0} \cong \Z/2$, and under the Artin map
$\Art_\Q$, the non-trivial element in $\ker \iota$ maps to the class of
complex conjugation $c \in \Gab_\Q$. Therefore $\ker \Ver_{K/\Q} =
\langle c \rangle$. Consequently the image of the induced map
$\Ver_{K/\Q}^* \colon X_K \to X_\Q$ that sends $\eta \mapsto \eta \circ
\Ver_{K/\Q}$, is exactly the subgroup consisting of even characters.

Now we can prove the reverse inclusion. Let $\mu \in X_K^c$. By
\cref{eq:KernelOfXQtoXK}, it has exactly two extensions to $G_\Q$,
differing by $\epsilon_K$. Since $\epsilon_K(c)=-1$, one of them is
even. Call it $\tilde\mu$. Then $\tilde\mu \in \Ver_{K/\Q}^*(X_K)$,
i.e., $\tilde\mu = \eta\circ\Ver_{K/\Q}$ for some $\eta \in X_K$. Then
$\mu = \tilde\mu|_{G_K} = (\eta \circ \Ver_{K/\Q})|_{G_K} = \eta\eta^c =
\Norm(\eta)$, which shows that $X_K^c \subseteq \Norm(X_K)$.
\end{proof}

\begin{prop}
  \label{prop:rhotildeUnconstrainedParity}
  Consider an irreducible representation $\tilde\rho \colon G_\Q \to
  \GL_2(\Fell)$. Then there exists a character $\eta : G_K \to
  \Fellbar^\times$ such that $\rho \coloneq \eta \otimes
  \tilde\rho|_{G_K}$ satisfies the twist condition $\rho^c \cong
  \chi_\ell (\det\rho)^{-1} \otimes \rho$ of \cref{eq:hyp}.
\end{prop}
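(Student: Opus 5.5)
We need $\eta$ with $\rho^c\cong\chi_\ell(\det\rho)^{-1}\otimes\rho$, where $\rho=\eta\otimes\tilde\rho|_{G_K}$. Since $\tilde\rho$ is a $G_\Q$-representation, $(\tilde\rho|_{G_K})^c\cong\tilde\rho|_{G_K}$; indeed $\tilde\rho(c)$ intertwines the two. Hence
\[
\rho^c\cong\eta^c\otimes\tilde\rho|_{G_K}.
\]
On the other side, $\det\rho=\eta^2\det\tilde\rho|_{G_K}$, so
\[
\chi_\ell(\det\rho)^{-1}\otimes\rho\cong\chi_\ell\,\eta^{-1}(\det\tilde\rho)^{-1}|_{G_K}\otimes\tilde\rho|_{G_K}.
\]
It therefore suffices to choose $\eta$ with
\[
\eta^c=\chi_\ell\,\eta^{-1}\,(\det\tilde\rho)^{-1}|_{G_K},\qquad\text{i.e.}\qquad \Norm(\eta)=\eta\eta^c=\bigl(\chi_\ell(\det\tilde\rho)^{-1}\bigr)|_{G_K}.
\]
This character-level equality gives the required isomorphism directly, so self-twists of $\tilde\rho$ play no role.

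The character $\mu:=\bigl(\chi_\ell(\det\tilde\rho)^{-1}\bigr)|_{G_K}$ is the restriction of a character of $G_\Q$. By the first part of \cref{lem:NormOfCharacterGroup} it lies in $X_K^c$. The second part of that lemma, $\Norm(X_K)=X_K^c$, then provides $\eta\in X_K$ with $\eta\eta^c=\mu$. Substituting this $\eta$ into the computation above gives the twist condition.

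The only real content is this norm-surjectivity step, which is exactly what \cref{lem:NormOfCharacterGroup} supplies via the Verlagerung. The only check needed is that the even extension of $\mu$ exists, and that is also handled in the lemma. Note that $\eta$ takes values in $\Fellbar^\times$, not necessarily $\F_\ell^\times$. This is why the statement allows $\Fellbar$ coefficients: $\rho$ itself may only be defined over $\Fellbar$.
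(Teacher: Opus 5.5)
Your proof is correct and is essentially the paper's argument. You use $(\tilde\rho|_{G_K})^c\cong\tilde\rho|_{G_K}$ to reduce the twist condition to $\Norm(\eta)=\chi_\ell(\det\tilde\rho)^{-1}|_{G_K}$, and then invoke $\res(X_\Q)=X_K^c=\Norm(X_K)$ from \cref{lem:NormOfCharacterGroup}; your framing of that reduction as a sufficient rather than equivalent condition is slightly more careful than the paper's, since equivalence could fail if $\tilde\rho|_{G_K}$ had nontrivial self-twists.
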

\begin{proof}
  Since $\tilde\rho$ is a $G_\Q$-representation, we know that
  $\tilde\rho|_{G_K}^c \cong \tilde\rho|_{G_K}$. Therefore the desired
  twist condition of \cref{eq:hyp} becomes equivalent to the condition
  $\Norm(\eta) = \chi_\ell (\det \tilde\rho)^{-1}|_{G_K}$. By
  \cref{lem:NormOfCharacterGroup}, since $\res(X_\Q) = X_K^c =
  \Norm(X_K)$, we deduce that there always exists a character $\eta
  \colon G_K \to \Fellbar^\times$ satisfying this condition.
\end{proof}

The results above describe the structure forced upon a $2$-dimensional 
subquotient $\rho$ of $\rho_\lam$, in the event that it is irreducible. 
In particular, if it is absolutely irreducible, 
\cref{prop:descent} shows that, up to twist, $\rho$ is the restriction 
of a $G_\Q$-representation. 
We now turn to the complementary problem of determining when 
the irreducible case can be ruled out.

\section{A criterion for ruling out an irreducible
\texorpdfstring{$2$}{2}-dimensional subquotient}
\label{sec:tracetest}

Suppose $\rho_\lambda$ admits a character $\varepsilon$ and $\rho$ is 
the $2$-dimensional quotient representation as in 
\cref{eq:reducible_rholam}. When 
$\varepsilon\varepsilon^c \ne \chi_\ell$, 
\cref{cor:completereducibility} says that $\rhoellKss$
is a direct sum of six characters, i.e., $\rho$ is reducible. 
In this section, we develop a criterion that can still allow us 
to conclude reducibility of $\rho$ even in the case that
$\varepsilon\varepsilon^c=\chi_\ell$.

Throughout $f_p$ will denote the characteristic polynomial of
$\rho_\ell(\Frob_p)$ or $\rho_\ell(\Frob_p^2)$ depending on whether $p$
is split or inert in $K = \Q(\zeta_3)$. Our standing assumption in this 
section is that for all primes $p$ of good reduction for the curve $C$, 
the characteristic polynomial $f_p$ splits completely into linear 
factors over $\Fell$. If $\rho$ is irreducible, this hypothesis 
strongly restricts its image. We denote by $G$ the image 
of $\rho$ as a subgroup of $\GL_2(\Fell)$.

We need a few lemmas. These are standard in the study of
$2$-dimensional Galois representations, and they are 
consequences of Dickson's classification of subgroups of 
$\GL_2(\Fell)$. See \cite[\S 2]{Ser72} for a review of this 
classification.

\begin{lem}
\label{lem:kleinfour}
Let $\ell \equiv 3 \pmod 4$ be a prime. Suppose $G \subseteq
\gl{2}{\Fell}$ is an irreducible subgroup such that the characteristic
polynomial of every element factors completely over $\Fell$. Then the
image of $G$ in $\PGL_2(\Fell)$ contains no subgroup isomorphic to
$(\Z/2)^2$.
\end{lem}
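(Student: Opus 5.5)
Suppose for contradiction that the image $\bar G$ of $G$ in $\PGL_2(\Fell)$ contains a Klein four-group $V=\{1,\bar a,\bar b,\bar a\bar b\}$. The plan is to lift $V$ to $\GL_2(\Fell)$ and use the splitting hypothesis on $a$ and $b$. We then show these two lifts can be simultaneously diagonalized, and that the resulting diagonal structure produces an element of $G$ whose characteristic polynomial is $x^2+1$ times a square factor. Since $-1$ is a non-square in $\Fell$ when $\ell\equiv 3\pmod 4$, this polynomial is irreducible, which is the contradiction.

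First, take lifts $a,b\in G$ of the two generators. Since $\bar a^2=1$ and $\bar a\neq 1$, we have $a^2=\alpha I$ with $a$ non-scalar. By hypothesis the characteristic polynomial of $a$ splits over $\Fell$. Its eigenvalues $x,y$ satisfy $x^2=y^2=\alpha$, and $x\neq y$: otherwise $a$ would be non-semisimple (a scalar times a unipotent), but then $a^2$ could not be scalar since $\ell$ is odd. So $y=-x$, and $a$ is diagonalizable over $\Fell$ with eigenvalues $\pm x$, where $x\in\Fell^\times$. Rescaling $a$ by $x^{-1}$ is allowed projectively. Concretely, I will work in a basis where $a=x\,\mathrm{diag}(1,-1)$.

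Second, use the relation $\bar a\bar b=\bar b\bar a$, which gives $bab^{-1}=\pm a$. If the sign were $+$, then $b$ would commute with the non-scalar diagonal matrix $a$, hence be diagonal. In that case $\bar a,\bar b$ would both lie in the image of the diagonal torus, whose only involution is the class of $\mathrm{diag}(1,-1)$. That would force $\bar b\in\{1,\bar a\}$, contradicting $|V|=4$. So $bab^{-1}=-a$. This forces $b$ to be antidiagonal, $b=\begin{pmatrix}0&s\\ t&0\end{pmatrix}$, with characteristic polynomial $X^2-st$. The splitting hypothesis makes $st$ a square in $\Fell$, say $st=u^2$.

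Third, compute the product
\[
ab=x\begin{pmatrix}0&s\\ -t&0\end{pmatrix},
\]
which has characteristic polynomial $X^2+x^2st=X^2+(xu)^2$. Its roots are $\pm xu\sqrt{-1}$, which are not in $\Fell$ because $-1$ is a non-square mod $\ell$ when $\ell\equiv3\pmod4$. Since $ab\in G$, this contradicts the splitting hypothesis. Notably, the irreducibility of $G$ is not even needed for this argument.

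The only delicate step is ruling out $bab^{-1}=+a$, and the torus argument above handles it.
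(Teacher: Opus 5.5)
Your proof is correct and follows essentially the same route as the paper: diagonalize one lift as a multiple of $\mathrm{diag}(1,-1)$, use $bab^{-1}=\pm a$ to force the other lift to be antidiagonal (the $+$ case gives a cyclic projective image), and then observe that $b$ and $ab$ square to the scalars $st$ and $-x^2st$, which cannot both be squares when $-1$ is a non-square modulo $\ell$. Your remark that irreducibility of $G$ is never used applies equally to the paper's argument.
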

\begin{proof}
Suppose not. Let $g,h \in G$ be such that their projections generate a
subgroup isomorphic to $(\Z/2)^2$ of $\PGL_2(\Fell)$. For any non-scalar
$x \in \gl{2}{\Fell}$, such that $x^2$ is the scalar matrix $z \cdot
\Id_2$, the eigenvalues of $x$ are the two square roots of $z$. If $x
\in G$, then by assumption the eigenvalues are in $\Fell^\times$, and so
$z \in \Fell^{\times 2}$.

Applying this to $g$ and conjugating, we may assume that $g =
\begin{pmatrix} a
& 0 \\ 0 & -a
\end{pmatrix}$ with $a \in \Fell^\times$. Since the images of $g$ and
$h$ in
$\PGL_2(\Fell)$ commute, we have $hgh^{-1} = \mu g$ with $\mu^2 = 1 \in
\Fell^\times$. If $\mu = 1$, it forces $h$ to be diagonal and thus the
images of $g$ and $h$ generate a cyclic subgroup of $\PGL_2(\Fell)$,
which is
a contradiction. So $\mu = -1$ and $h = \begin{pmatrix} 0 & b \\ c &
0\end{pmatrix}$ for some
$b,c \in \Fell^\times$. Now $h^2 = bc \cdot \Id_2$ and $(gh)^2 = -a^2bc
\cdot \Id_2$, so both $bc$ and $-bc$ are squares in $\Fell^\times$. This
forces $-1 \in \Fell^{\times 2}$, contradicting $\ell \equiv 3 \pmod 4$.
\end{proof}

\begin{lem}
\label{lem:splittorusnormaliser}
Let $\ell \geq 5$ be a prime with $\ell \equiv 3 \pmod 4$. Suppose $G
\subseteq \gl{2}{\Fell}$ is an irreducible subgroup such that the
characteristic polynomial of every element factors completely over
$\Fell$. Then $G$ is contained in the normaliser $N(T)$ of a split
Cartan subgroup $T \subset \gl{2}{\Fell}$, but not in $T$. Furthermore, 
the image of $G$ in $\PGL_2(\Fell)$ is isomorphic to the dihedral group 
of order $2n$ for some $n \geq 3$.
\end{lem}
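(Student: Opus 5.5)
We argue through Dickson's classification \cite[\S 2]{Ser72}, applied to the image of $G$ in $\PGL_2(\Fell)$. Since $\ell \nmid |G|$ is not known a priori, we first dispose of the case $\ell \mid |G|$. Then we treat the non-split Cartan, its normaliser, and the exceptional images using the splitting hypothesis together with \cref{lem:kleinfour}. This leaves only the normaliser of a split Cartan.

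\textbf{Step 1: $\ell \mid |G|$.} Suppose $\ell$ divides $|G|$. By Dickson, either $G$ contains $\SL_2(\Fell)$, or $G$ is contained in a Borel subgroup. The Borel case contradicts irreducibility. If $G \supseteq \SL_2(\Fell)$, pick an element of $\SL_2(\Fell)$ with irreducible characteristic polynomial, which exists since non-split tori meet $\SL_2$ nontrivially for $\ell \ge 5$. This contradicts the splitting hypothesis.

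\textbf{Step 2: $\ell \nmid |G|$.} Let $H$ be the image of $G$ in $\PGL_2(\Fell)$. Dickson says $H$ is cyclic, dihedral, or isomorphic to $A_4$, $S_4$ or $A_5$.

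\emph{Exceptional cases.} Each of $A_4$, $S_4$ and $A_5$ contains a Klein four-group $(\Z/2)^2$. These are excluded by \cref{lem:kleinfour}, which uses $\ell \equiv 3 \pmod 4$.

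\emph{Cyclic case.} If $H$ is cyclic, $G$ is abelian: it is generated by scalars together with one element $g$ lifting a generator of $H$. By the splitting hypothesis $g$ has eigenvalues in $\Fell$. If $g$ has distinct eigenvalues, $G$ lies in a split Cartan and is reducible. If $g$ has a repeated eigenvalue, then $g$ is either scalar or of order divisible by $\ell$. The latter is excluded, so $H$ is trivial and $G$ is scalar, again reducible. So $H$ is not cyclic.

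\emph{Dihedral case.} Now $H \cong D_n$. By \cref{lem:kleinfour}, $n$ must be odd: $D_2$ is itself Klein four, and $D_n$ with $n$ even contains one. The case $n = 1$ is cyclic, already excluded, so $n \ge 3$.

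\textbf{Step 3: identifying the Cartan.} The cyclic subgroup $C_n \subset H$ of index $2$ lifts to an abelian subgroup $G_0 \subset G$ of index $2$. By the Step 2 argument, $G_0$ is not scalar, since $n \ge 3$. So some $g \in G_0$ has distinct eigenvalues, which lie in $\Fell$ by hypothesis. Then the centraliser of $g$ is a split Cartan subgroup $T$, and $G_0 \subseteq T$.

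Since $G_0$ is normal in $G$ and not scalar, $G$ normalises the centraliser of $G_0$, which is $T$. Hence $G \subseteq N(T)$. Finally, $G \not\subseteq T$, since otherwise $G$ would be reducible.

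\textbf{Main obstacle.} The delicate point is the dihedral step: one must ensure that the cyclic part of $H$ comes from a split Cartan rather than a non-split one. This is where the splitting hypothesis is used decisively. An element of a non-split Cartan with non-scalar image has irreducible characteristic polynomial, so the splitting hypothesis rules this out. One must also correctly exclude even $n$ using \cref{lem:kleinfour}.
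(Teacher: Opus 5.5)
Your proof is correct and takes essentially the same approach as the paper. Both use Dickson's classification: \cref{lem:kleinfour} excludes $A_4$, $S_4$, $A_5$ and $n=2$, irreducibility excludes Borel and cyclic images, and the splitting hypothesis excludes non-split Cartans. Your version is more explicit than the paper's one-paragraph proof in three ways: you treat the case $\ell \mid |G|$ (including $G \supseteq \SL_2(\Fell)$) directly, you obtain the split Cartan as the centraliser of a non-scalar element of the index-$2$ abelian subgroup (as in the proof of \cref{lem:projorder}), and you get the slightly stronger conclusion that $n$ is odd.
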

\begin{proof}
This is immediate from Dickson's classification of the subgroups of
$\gl{2}{\Fell}$ for $\ell \geq 5$ (see \cite[\S2]{Ser72} or
\cite{Dic58}): Since $\ell \equiv 3 \pmod 4$, \cref{lem:kleinfour} rules 
out the exceptional subgroups of type $A_4, S_4, A_5$, since the 
projective image contains a subgroup isomorphic to $(\Z/2)^2$ in each 
case. The irreducibility assumption rules out Borel subgroups. The 
assumption about every characteristic polynomial factoring completely 
over $\Fell$ rules out any non-scalar element of a non-split Cartan 
subgroup from being contained in $G$.

\noindent Irreducibility of $G$ implies that $n \neq 1$, and 
\cref{lem:kleinfour} implies that $n \ne 2$, which proves the last 
statement.
\end{proof}

\begin{remark}
\label{rem:ell7mod12}
The congruence hypothesis $\ell \equiv 3 \pmod 4$ in
\cref{lem:splittorusnormaliser} cannot in general be omitted. 
Indeed, if
$\ell \equiv 1 \pmod{12}$, there exists an irreducible subgroup
$G \subseteq \SL_2(\Fell)$ with $G \cong \SL_2(\F_3)$ such that the
characteristic polynomial of every element of $G$ splits over
$\Fell$. Its projective image is $A_4$, so $G$ is not contained in the 
normalizer of a split Cartan. Thus, in this case we instead use 
\cref{lem:projorder}, whose hypothesis can be checked by 
computing traces of $\rho$. (see \cref{subsec:results_ell13} 
for an example of verification of this hypothesis for $\ell=13$).
\end{remark}

\begin{lem}
\label{lem:projorder}
Let $\ell \geq 5$ be a prime. Suppose $G \subseteq \gl{2}{\Fell}$ is an
irreducible subgroup such that the characteristic polynomial of every
element factors completely over $\Fell$, and that $G$ contains an 
element of projective order at least $6$. Then $G$ is contained in the 
normaliser $N(T)$ of a split Cartan subgroup $T$, but not in $T$.
\end{lem}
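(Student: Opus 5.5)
The plan is to run through Dickson's classification as in the proof of \cref{lem:splittorusnormaliser}, but to replace \cref{lem:kleinfour} by the projective-order hypothesis when ruling out the exceptional cases. Let $\ell \ge 5$ and let $G \subseteq \gl{2}{\Fell}$ be irreducible with all characteristic polynomials split. Let $\PP G$ be its image in $\PGL_2(\Fell)$. By Dickson (see \cite[\S2]{Ser72}), one of the following holds:
\begin{enumerate}[(i)]
\item $G$ lies in a Borel subgroup;
\item $G$ lies in a Cartan subgroup, split or non-split;
\item $G$ lies in the normaliser of a Cartan subgroup but not in the Cartan itself;
\item $\PP G \cong A_4$, $S_4$ or $A_5$;
\item $G \supseteq \SL_2(\Fell)$.
\end{enumerate}

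Several cases are excluded directly.
\begin{itemize}
\item Irreducibility rules out (i), and it also rules out $G$ lying in a split Cartan $T$.
\item The exceptional groups $A_4,S_4,A_5$ have element orders at most $5$, so the hypothesis of an element of projective order $\ge 6$ kills (iv).
\item For (v), $\SL_2(\Fell)$ contains non-split-Cartan elements, whose characteristic polynomials are irreducible. An example is an element of order $\ell+1$ in a non-split torus of $\SL_2$; its eigenvalues lie in $\F_{\ell^2}\setminus\Fell$ because $\ell+1 \nmid \ell-1$ for $\ell\ge5$. This contradicts the splitting hypothesis.
\end{itemize}

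The main point is to rule out the non-split Cartan $C$ and its normaliser $N(C)$. Here I will use that a non-scalar element of $C$ has irreducible characteristic polynomial over $\Fell$: its eigenvalues are $\alpha,\alpha^\ell$ with $\alpha\in\F_{\ell^2}\setminus\Fell$. Hence $G\cap C$ consists only of scalars.

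If $G\subseteq C$, then $G$ is scalar, contradicting irreducibility. If $G \subseteq N(C)$ but $G\not\subseteq C$, then $G\cap C$ is the scalar subgroup $Z$, and it has index $2$ in $G$. So $\PP G$ has order $2$: it is cyclic of order $2$, with no element of projective order $\ge 6$, a contradiction.

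Alternatively, $|\PP G|\le 2$ makes $G$ abelian: it is generated by scalars and one element $g$. Then $g$ has split characteristic polynomial, so $G$ has a common eigenvector (or is scalar), contradicting irreducibility. Either way this case is excluded.

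That leaves (iii) with a split Cartan $T$, i.e.\ $G\subseteq N(T)$ and $G\not\subseteq T$, which is the claim. I expect no step to be a genuine obstacle. The only care needed is to check that the normaliser-of-non-split-Cartan case really forces $G\cap C$ to be scalar; this uses the splitting hypothesis on every element, not only on generators. One should also note that Dickson's list in the form of \cite{Ser72} covers all $\ell\ge5$ with no further congruence condition, so that \cref{lem:kleinfour} and the assumption $\ell\equiv 3\pmod 4$ are no longer needed.
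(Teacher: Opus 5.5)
Your proof is correct and follows essentially the same route as the paper. Both use Dickson's classification: Borel subgroups are excluded by irreducibility, $\SL_2(\Fell)$ by the splitting hypothesis, $A_4,S_4,A_5$ by element orders, and the Cartan cases by splitting plus irreducibility. The one difference is in showing that the Cartan in the normaliser case is split. The paper places the element of projective order $\geq 6$ inside $T$ and identifies $T$ with its centraliser. You instead note that $G\cap C$ is scalar for non-split $C$, which forces $|\PP G|\le 2$ and hence reducibility. So in your version the projective-order hypothesis is only needed to exclude the exceptional groups.
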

\begin{proof}
We again use Dickson's classification in the form given by Serre
\cite{Ser72}. First, if $\ell$ divides $|G|$, then \cite[Prop.~15]{Ser72} 
implies that $G$ is either contained in a Borel subgroup or contains 
$\SL_2(\Fell)$. The former contradicts irreducibility, while the latter 
is impossible since $\SL_2(\Fell)$ contains elements lying in a nonsplit 
Cartan, whose characteristic polynomials are irreducible over $\Fell$.

Thus $\ell \nmid |G|$. By Serre's classification \cite[\S2.6]{Ser72}, 
the projective image $\overline G$ is cyclic, dihedral, or isomorphic 
to $A_4$, $S_4$, or $A_5$. Since the orders of elements of $A_4$, $S_4$, 
and $A_5$ are at most $5$ and we are given that $\overline G$ has an 
element of order at least $6$, the exceptional cases are excluded. In 
the cyclic case, $G$ is contained in a Cartan subgroup by 
\cite[\S2.6(i)]{Ser72}. Since the characteristic polynomials are assumed 
to split over $\Fell$, it forces the Cartan subgroup to be split. But 
this contradicts irreducibility of $G$.

Hence $\overline G$ is dihedral and $G$ is contained in the normalizer 
$N(T)$ of a Cartan subgroup $T$, but not in $T$. We need to show that 
$T$ is split. Every element of $N(T) \setminus T$ has projective order 
$2$. Hence an element $g \in G$ with projective order at least $6$ lies 
in $T$. Since the characteristic polynomial of $g$ splits over $\Fell$ 
and $g$ is not a scalar, it has two distinct eigenvalues in $\Fell$. 
Therefore the centralizer of $g$ in $\GL_2(\Fell)$ is a split Cartan 
subgroup. Since $T$ is abelian, contains $g$, and is a Cartan subgroup, 
it must be the centralizer of $g$. So $T$ is split.
\end{proof}

Under the hypothesis of either \cref{lem:splittorusnormaliser} or 
\cref{lem:projorder}, the conclusion is that $G$ is a subgroup of the 
normalizer $N(T)$ of a split Cartan subgroup $T$, but $G$ is not 
contained in $T$ itself.
Moreover the projective image $\overline G$ of $G$ in $\PGL_2(\Fell)$ 
is dihedral of order $2n$ for some $n \geq 3$. Furthermore, 
$G \cap T$ is the unique subgroup of index $2$ in $G$ acting reducibly.
By using the quadratic extension corresponding to this subgroup, we 
give a criterion to rule out irreducibility of the $2$-dimensional 
subquotient.

\begin{prop}
\label{prop:inducedorsixchars}
Let $C\colon y^3 = f_4(x)$ be a Picard curve over $\Q$ and $S$ the set
of primes of bad reduction for $C$. Let $\ell \equiv 1 \pmod{3}$ be a
prime, so that $\ell$ splits as $\lam\lambar$ in $K = \Q(\zeta_3)$.
Suppose that $\rho_\lam$ is reducible as in \cref{eq:reducible_rholam},
with character $\varepsilon$ and corresponding $2$-dimensional
quotient $\rho$. Suppose that either $\ell \equiv 3 \pmod{4}$ or 
the image of $G \coloneqq \rho(G_K)$ in $\PGL_2(\Fell)$ contains 
an element of order at least $6$. 
Suppose also that $f_p$ factors completely over $\Fell$ 
for every good prime $p \not\in S \cup \{3,\ell\}$. Then one of the 
following holds:
\begin{enumerate}
  \item \label{item:sixchars} $\rho_\ell|_{G_K}^{ss}$ is a direct sum of
  six characters, or
  \item \label{item:induced} there exists a squarefree integer $d \ne
  1$, with prime factors in $S \cup \{3,\ell\}$, such that $K' =
  K(\sqrt{d}) \ne K$ and
  \[
    \rho \simeq \Ind_{K'}^K \psi \qquad \text{for some character } \psi
    : G_{K'}
    \to \Fell^\times.
  \]
\end{enumerate}
In case \labelcref{item:induced}, we have $\varepsilon\varepsilon^c =
\chi_\ell$, and for every prime $\pp|p$ of $K$, which is inert in $K'/K$
with $p \not\in S \cup \{3,\ell\}$, the quotient $f_p/(T -
\varepsilon(\pp))(T - \varepsilon^c(\pp))$ is an even polynomial.
\end{prop}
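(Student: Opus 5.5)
We argue by cases on whether $\rho$ is irreducible. If $\rho$ is reducible, then $\rho_\lam^{ss}$ is a sum of three characters, and $\rho_{\lambar}^{ss}\simeq(\rho_\lam^{ss})^c$ by \cref{eq:isom_conjrep_dualrep}. Hence $\rhoellKss$ is a sum of six characters, which is case \labelcref{item:sixchars}. So we assume $\rho$ is irreducible and aim for case \labelcref{item:induced}. Then \cref{prop:detrho_conjugatedual} gives $\varepsilon\varepsilon^c=\chi_\ell$ and $\rho_\lam\simeq\varepsilon\oplus\rho$.

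The first step is to transfer the splitting hypothesis from Frobenius elements to the whole image $G=\rho(G_K)$. For a good prime $p\notin S\cup\{3,\ell\}$ and $\pp\mid p$, the element $\Frob_\pp$ is $\Frob_p$ or $\Frob_p^2$. So $f_p$ is the characteristic polynomial of $\rho_\ell|_{G_K}(\Frob_\pp)$, which equals the product of the characteristic polynomials of $\varepsilon$, $\rho$, $\varepsilon^c$ and $\rho^c$ at $\Frob_\pp$. Since $f_p$ splits, the characteristic polynomial of $\rho(\Frob_\pp)$ splits. By Chebotarev every element of $G$ is the image of some such Frobenius, so every characteristic polynomial of an element of $G$ splits over $\Fell$. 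Now \cref{lem:splittorusnormaliser} (when $\ell\equiv3\pmod4$) or \cref{lem:projorder} (when the projective image has an element of order $\ge6$) gives $G\subseteq N(T)$ with $G\not\subseteq T$ for a split Cartan $T$. Let $K'$ be the fixed field of $\rho^{-1}(T)$, a quadratic extension of $K$. Since $T$ is split, $\rho|_{G_{K'}}$ is a sum of two characters $\psi\oplus\psi'$ with $\psi'$ the $G_K/G_{K'}$-conjugate of $\psi$. Because $\rho$ is irreducible, $\psi\ne\psi'$, and Frobenius reciprocity gives $\rho\simeq\Ind_{K'}^K\psi$.

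The main obstacle is showing $K'=K(\sqrt d)$ with $d\in\Q$. Since $G\cap T$ is the unique index-$2$ subgroup of $G$ acting reducibly, $K'$ is determined by $\rho$ alone. From \cref{eq:hyp}, $\rho^c\cong\psi\otimes\rho$, and twisting by a character preserves reducibility of restrictions. Therefore $\rho^c|_{G_{K'}}$ is reducible, while $\rho^c$ is reducible exactly on $c(K')$. Hence $c(K')=K'$, so $K'/\Q$ is Galois of degree $4$. The obstacle is excluding cyclic $\Z/4$: the Galois group must be biquadratic. The approach is to use the projective extension $\PP\tilde\rho$ from \cref{prop:descent}, if absolute irreducibility is available. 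In the dihedral projective image of order $2n$ with $n\ge3$, the index-$2$ cyclic subgroup is characteristic. So the quadratic extension $F'$ of $\Q$ cut out by the composite $G_\Q\to\PP\tilde\rho(G_\Q)\to(\text{image})/(\text{cyclic part})$ satisfies $K'=KF'$, and $K'$ is biquadratic. If $\rho$ is irreducible but not absolutely irreducible, the split-Cartan shape already forces absolute irreducibility, since a dihedral image with $n\ge3$ is absolutely irreducible. Alternatively one can argue directly: $c$ acts on $\Gal(K'/K)$ of order $2$, and a lift $\tilde c$ of complex conjugation has order $2$ in $\Gal(K'/\Q)$ and lies outside $\Gal(K'/K)$, so the group is not cyclic. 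That yields $K'=K(\sqrt d)$ with $d$ squarefree and $d\ne1,-3$. The extension $K'/K$ is unramified outside the primes where $\rho$ ramifies, i.e.\ outside $S\cup\{3,\ell\}$. So $\Q(\sqrt d)$ is unramified outside $S\cup\{3,\ell\}$, which puts the prime factors of $d$ in that set; $d$ can be adjusted by $-3$ if necessary.

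For the final assertion, let $\pp\mid p$ be inert in $K'/K$. Then $\rho(\Frob_\pp)\in N(T)\setminus T$ is antidiagonal, so its trace is $0$. Its conjugate $\rho^c(\Frob_\pp)\cong(\psi\otimes\rho)(\Frob_\pp)$ also has trace $0$. The factorization of $f_p$ shows $f_p/(T-\varepsilon(\pp))(T-\varepsilon^c(\pp))$ is the product of two quadratics $T^2+\det$. This product is an even polynomial.
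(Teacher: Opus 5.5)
Your proof is correct and follows the paper's proof essentially step for step: Chebotarev to get split characteristic polynomials on all of $G$, then \cref{lem:splittorusnormaliser} or \cref{lem:projorder} to get $\rho\simeq\Ind_{K'}^K\psi$, then uniqueness of the reducible index-$2$ subgroup together with \cref{eq:hyp} to get $c(K')=K'$, then complex conjugation of order $2$ outside $\Gal(K'/K)$ to exclude $\Z/4$, and finally trace zero off $T$ for the evenness claim (using the twist relation to see that $\rho^c(\Frob_\pp)$ also has trace zero is a slightly cleaner variant of the paper's last line). The detour through $\PP\tilde\rho$ is unnecessary and, as written, conflates $\PP\tilde\rho(G_\Q)$ with $\PP\rho(G_K)$: if the former has order $4n$, the quotient by the cyclic subgroup of order $n$ has order $4$ and cuts out $K'$ itself, so it does not rule out $\Z/4$ without further work; rest that step on the direct complex-conjugation argument, which is exactly the paper's.
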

\begin{proof}
If $\rho$ is reducible then each of the four summands of $\rho_\ell^{ss}
= \varepsilon \oplus \rho^{ss} \oplus (\chi_\ell \otimes \rho^{-T})^{ss} \oplus
(\chi_\ell \otimes \varepsilon^{-1})$ is a sum of characters, and we are
in case \labelcref{item:sixchars}.

Suppose that $\rho$ is irreducible. Then we have $\varepsilon 
\varepsilon^c = \chi_\ell$ by \cref{eq:varepsilon_and_det}. By Chebotarev 
density theorem, the conjugacy class of any $g \in G$ is the class of 
$\rho(\Frob_\pp)$ for a positive proportion of primes $\pp$ of $K$.
Since the primes of $K$ of residue degree $1$ has density $1$, 
we may moreover take $\pp$ to have residue degree $1$. 
For such a prime $\pp$ above $p$, one has 
$\rhoell(\Frob_\pp) = \rhoell(\Frob_p)$, so that $f_p$ is the 
characteristic polynomial of $\rho_\ell(\Frob_\pp)$. As $\rho$ is a
subquotient of $\rho_\ell|_{G_K}$, the characteristic polynomial of $g$
divides $f_p$ and hence factors completely over $\Fell$. Therefore
\cref{lem:splittorusnormaliser,lem:projorder} imply that there exists a 
split Cartan subgroup $T$ of $\GL_2(\Fell)$ with $G \subseteq N(T)$, $G
\not\subseteq T$. Let $K'|K$ be the quadratic extension cut out by the
composite $G_K \xrightarrow{\rho} G \twoheadrightarrow G/(G\cap T) \cong 
\overline G/(\overline G \cap \overline T) \cong \Z/2$. 
Since the projective image $\overline G$ is dihedral of order $2n$ with 
$n\geq3$, the subgroup $\overline G\cap\overline T$ is the unique 
cyclic subgroup of index $2$ in $\overline G$.
Since $\rho(G_{K'}) \subseteq T$, we deduce that $\rho|_{G_{K'}}$
is a direct sum of two characters. Let $\psi : G_{K'} \to \Fell^\times$
denote one of the characters. Then by Frobenius reciprocity, we get
$\rho \simeq \Ind_{K'}^K \psi$. Since $\rho_\ell$ is unramified outside
$S \cup \{\ell\}$, the same is true for the extension $K'/K$.

We claim $K'/\Q$ is Galois. On the one hand, by \cref{eq:hyp} we know 
that $\rho^c \simeq \chi_\ell \det(\rho)^{-1} \otimes \Ind_{K'}^K \psi$ 
is also induced from $K'$. On the other hand $\rho^c$ is induced from 
$(K')^c$. 
Since $\rho^c$ is projectively isomorphic to $\rho$, 
Galois conjugation preserves the unique cyclic subgroup of index $2$ 
in $\overline G$. It follows that $(K')^c=K'$. Thus $K'/\Q$ is Galois 
of degree $4$ containing $K$. Since the
quadratic subfield $K = \Q(\zeta_3)$ is imaginary, $\Gal(K'|\Q)
\not\cong \Z/4$. Therefore $\Gal(K'/\Q) \cong \Z/2 \times \Z/2$. Hence
$K' = K(\sqrt d)$ for a squarefree integer $d \neq 1$. Every prime
dividing $d$ ramifies in $K'/\Q$ and so lies in $S \cup \{3,\ell\}$.

Finally, let $\pp|p$ be a prime of $K$ which is inert in $K'/K$ with $p
\not\in S \cup \{3,\ell\}$. Let $\pp'$ denote the prime of $K'$ above
$\pp$. Then $\Frob_\pp \not\in G_{K'}$, so $\rho(\Frob_\pp) \in G
\setminus T$, and hence has characteristic polynomial $T^2 -
\psi(\Frob_{\pp'})$. Therefore $f_p = (T - \varepsilon(\pp))(T -
\varepsilon^c(\pp)) \cdot \left( T^2 - \psi(\Frob_{\pp'}) \right)\left(
T^2 - \psi^c(\Frob_{\pp'}) \right)$, which gives the last claim.
\end{proof}

\section{Algorithms}
\label{sec:algorithms}

A key ingredient for all algorithms presented in this section is the set
of Euler factors of a Picard curve $C/\Q$ at many good primes. 
These can be computed in average polynomial time, by combining the work 
\cite{Sut20} of Sutherland on computing the Euler factor mod $p$ for 
superelliptic curves, and the work \cite{AFP22} of Asif, Fite and 
Pentland to lift this to the true integral Euler factor $L_p(C,x)$ 
when $C$ is a Picard curve.

\subsection{Computing the characters of $K$ possibly occuring 
in the semisimplification of the $\ell$-torsion representation}
\label{subsec:varepsilon}

Let $S$ denote the set of primes of bad reduction for a curve $C$. Given 
the Euler factors at sufficiently many good primes for the Jacobian of 
$C$, there is a general algorithm to compute the characters
in the $\ell$-torsion Galois representation of $\Jac(C)$: find the
subgroup of all Hecke characters $G_K \rightarrow \Fell^\times$ that are
unramified outside $S$; prune this space successively by removing any 
character whose value at a prime $\pp$ of $K$ is not a root of the Euler 
factor at $\pp$ modulo $\ell$.

In the case of a Picard curve \emph{over $\Q$}, if $\lam,\lambar$ are
the primes in $K$ above a split prime $\ell$, the two $3$-dimensional
representations $\rho_\lam$ and $\rho_{\lambar}$ are not just related as
duals, but also as Galois conjugates. Using this extra structure, we can
get rid of some spurious characters, and compute the possibilities for
the set of all characters appearing in $\rho_\ell|_{G_K}^{ss}$. This
leads to the improved \cref{alg:find_varepsilon}.

\begin{prop}
  \label{prop:irredblocksofchars}
  Let $L$ be the multiset of all $1$-dimensional Jordan--H\"older
  constituents of $\rho_\ell|_{G_K}^{ss}$. Then $L$ is a disjoint union, 
  with multiplicities, of blocks of the following two types:
  \begin{enumerate}
    \item \label{item:conjandualpair} a pair of Galois conjugate
    characters $\varepsilon, \varepsilon^c$ such that $\varepsilon
    \varepsilon^c = \chi_\ell$, or
    \item \label{item:4tuple} a $4$-tuple of characters $\varepsilon,
    \varepsilon^c, \chi_\ell \otimes \varepsilon^{-1}, \chi_\ell \otimes
    (\varepsilon^c)^{-1}$, if $\varepsilon \varepsilon^c \ne \chi_\ell$.
  \end{enumerate}
\end{prop}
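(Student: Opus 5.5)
The plan is to work with multisets of characters of $G_K$ and use two involutions on them: Galois conjugation $\eta\mapsto\eta^c$ and the ``dual'' map $\eta\mapsto\chi_\ell\eta^{-1}$. These commute, and $\chi_\ell^c=\chi_\ell$ since $\chi_\ell$ comes from $G_\Q$. The key claim is that $L$ is stable, with multiplicities, under both involutions. Once that is known, $L$ decomposes into orbits of the group $\langle \eta\mapsto\eta^c,\ \eta\mapsto\chi_\ell\eta^{-1}\rangle\cong(\Z/2)^2$. One then groups these orbits into the two block types.

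\emph{Stability.} By \cref{eq:rhoell_decomp}, $\rhoellKss=\rho_\lam^{ss}\oplus(\chi_\ell\otimes\rho_\lam^{-T})^{ss}$. Hence the multiset of $1$-dimensional constituents of $\rho_{\lambar}^{ss}$ is the dual of that of $\rho_\lam^{ss}$. Here we use that the dual of an irreducible $2$-dimensional constituent is again irreducible, so no characters are created or lost. By \cref{eq:isom_conjrep_dualrep}, that same multiset is also the Galois conjugate of the one for $\rho_\lam^{ss}$; conjugation likewise preserves irreducibility of $2$-dimensional pieces. Call $M$ the multiset of characters in $\rho_\lam^{ss}$. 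Then $L=M\sqcup M^c$ and $M^c=\chi_\ell M^{-1}$. It follows that $L$ is stable under conjugation (it swaps $M$ and $M^c$), under duality (it sends $M$ to $M^c$ and $M^c$ to $M^{cc}=M$), and under their composite $\eta\mapsto\chi_\ell(\eta^c)^{-1}$, which preserves $M$.

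\emph{Decomposition.} Pick $\varepsilon\in M$ and consider its orbit.

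If $\varepsilon\varepsilon^c=\chi_\ell$, then the dual of $\varepsilon$ equals $\varepsilon^c$. Removing the pair $\{\varepsilon\in M,\ \varepsilon^c\in M^c\}$ leaves $M'=M\setminus\{\varepsilon\}$, which still satisfies $M'^c=\chi_\ell M'^{-1}$. This gives a block of type \labelcref{item:conjandualpair}.

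If $\varepsilon\varepsilon^c\neq\chi_\ell$, then $\varepsilon':=\chi_\ell(\varepsilon^c)^{-1}\ne\varepsilon$, and $\varepsilon'\in M$ by stability. Remove $\varepsilon,\varepsilon'$ from $M$ and their conjugates $\varepsilon^c$ and $(\varepsilon')^c=\chi_\ell\varepsilon^{-1}$ from $M^c$. This removes exactly the $4$-tuple of type \labelcref{item:4tuple} and again leaves an $M'$ with $M'^c=\chi_\ell M'^{-1}$. Induct on $|M|$.

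\emph{Multiplicity bookkeeping.} The main obstacle is making sure repeated characters do not break the removal step. The relation $M^c=\chi_\ell M^{-1}$ is an equality of multisets, so the map $\eta\mapsto\chi_\ell(\eta^c)^{-1}$ is a multiplicity-preserving involution on $M$. Its non-fixed points therefore pair up with equal multiplicities, and removing one copy of each element of a pair preserves the relation. With that in place, the induction is just bookkeeping.

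One should also check that the four characters in \labelcref{item:4tuple} are genuinely distinct enough that the block is well-defined. One has $\varepsilon\ne\chi_\ell(\varepsilon^c)^{-1}$ and $\varepsilon^c\ne\chi_\ell\varepsilon^{-1}$. Coincidences such as $\varepsilon=\varepsilon^c$ are harmless, since the block is recorded with multiplicity.
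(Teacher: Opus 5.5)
Your proof is correct and uses the same idea as the paper. The paper simply invokes the isomorphism $\rho_\lam^c \simeq \rho_{\lambar} \simeq \chi_\ell \otimes \rho_\lam^{-T}$ of \cref{eq:isom_conjrep_dualrep} to observe that any $\varepsilon$ in $L$ either satisfies $\varepsilon\varepsilon^c=\chi_\ell$ or comes together with $\varepsilon^c$, $\chi_\ell\varepsilon^{-1}$ and $\chi_\ell(\varepsilon^c)^{-1}$. Your passage to the multiset $M$ of constituents of $\rho_\lam^{ss}$, with $L = M \sqcup M^c$ and the multiplicity-preserving involution $\eta \mapsto \chi_\ell(\eta^c)^{-1}$ on $M$, makes rigorous the multiplicity bookkeeping that the paper's one-line argument leaves implicit.
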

\begin{proof}
  By \cref{eq:isom_conjrep_dualrep}, if a character
  $\varepsilon: G_K \rightarrow \Fell^\times$ appears in the
  semisimplification of $\rho_\ell|_{G_K}$, then either $\varepsilon
  \varepsilon^c = \chi_\ell$, or each of the three characters
  $\varepsilon^c, \chi_\ell \otimes \varepsilon^{-1}, \chi_\ell \otimes
  (\varepsilon^c)^{-1}$ also appears in the semisimplification of
  $\rho_\ell|_{G_K}$.
\end{proof}

\begin{algorithm}
\label{alg:find_varepsilon}
\textbf{Input:}
\begin{itemize}
    \item A Picard curve $C\colon y^3 = f_4(x)$, and the finite set $S$
    of primes of bad reduction for $C$.
    \item a prime $\ell \equiv 1 \pmod 3$ such that $\rho_\lam$ is
    possibly reducible as in \cref{eq:reducible_rholam} for some prime
    ideal $\lam$ above $\ell$.
\end{itemize}
\textbf{Output:} A set $M$ containing multisets of Hecke characters $G_K
\rightarrow \Fell^\times$, such that one of the multisets is exactly the
set of all $\Fell^\times$-valued characters appearing in the
semisimplification $\rho_\ell|_{G_K}^{ss}$.
\begin{enumerate}
\item \label{step:HeckeCharacterGroup} Construct the group $H$ of all
Hecke characters $G_K \rightarrow \Q(\zeta_{\ell-1})$ of order dividing
$\ell-1$, that are unramified outside $S \cup \{\ell\}$. Choose a prime
above $\ell$ in $\Q(\zeta_{\ell-1})$, so that any character $\chi \in H$
may be viewed as valued in $\Fell^\times$, without any loss of
information.
\item \label{step:prune1} Let $L$ be the set of characters in $H$. For
all primes $p \not\in S \cup \{3,\ell\}$ up to a chosen bound, do the
following to recursively shrink the set $L$:
\begin{enumerate}[(a)]
  \item \label{step:charpolfp} Compute $f_p$, which is defined as the
  characteristic polynomial of $\rho_\ell(\Frob_p)$ or
  $\rho_\ell(\Frob_p^2)$ depending on whether $p$ is split or inert in
  $K = \Q(\zeta_3)$. If $p \equiv 1 \pmod 3$, this is done by reducing
  the Euler factor of $C$ at $p$ modulo $\ell$. If $p \equiv 2 \pmod 3$,
  $f_p$ is the polynomial whose roots are the squares of the roots of
  the Euler factor, reduced modulo $\ell$.
  \item Choose a prime ideal $\pp$ in $K$ above $p$. For each possible
  character $\chi \in L$, if $\chi(\pp) \in \Fell^\times$ is not a root
  of $f_p \pmod \ell$, then remove $\chi$ from $L$.
\end{enumerate}
\item \label{step:identifypairs} After the pruning step in
\cref{step:prune1}, a small set $L$ of characters will remain. 
Identify the dual pairs $\{\chi_1, \chi_2\}$ in $L$, i.e., pairs 
satisfying $\chi_1 \chi_2 = \chi_\ell$.
Identify the Galois conjugate pairs $\{\chi,\chi^c\}$ in $L$ exactly, 
or by checking the equality $\chi(\pp) = \chi^c(\ppbar)$ for all prime 
ideals $\pp$ above split primes $p$ up to a chosen large enough bound.
\item Divide $L$ in to blocks of $2$ or $4$ characters, as described in
\cref{prop:irredblocksofchars}.
\item \label{step:constructcombinations} Construct the set $M$ of all
possible combinations of blocks of characters taken with multiplicities,
as long as the number of characters counted with multiplicity is not
more than $6$.
\item \label{step:prune2} For each prime $p \not\in S \cup \{3\}$ and
corresponding characteristic polynomial $f_p$ computed in
\cref{step:prune1}, do the following: Choose a prime ideal $\pp$ in $K$
above $p$. For every combination $X$ in $M$, if the multiset $\{
\chi(\pp) : \chi \in X \}$ containing $|X|$ not-necessarily distinct
elements of $\Fell^\times$ is not a subset of the multiset of roots of
$f_p \pmod \ell$, then remove the combination $X$ from $M$.
\item Return the set $M$ of multisets of characters.
\end{enumerate}
\end{algorithm}

\begin{remark}
  \label{rem:Raynaud}
  Suppose that the Jacobian of $C$ has good reduction 
  at a prime $\ell$ that splits in $K$. Then 
  \cite[Thm. 3.4.3]{Raynaud} and \cite[Exposé IX]{SGA7}
  together constrain the action of inertia at a prime $\lam$ 
  above $\ell$ on the Jordan-Holder factors of the 
  $\ell$-torsion. In particular, 
  $\varepsilon: G_K \rightarrow \Fell^\times$ is a character 
  appearing in the semisimplification $\rho_\ell|_{G_K}^{ss}$,
  then one of $\varepsilon$ or $\chi_\ell \otimes 
  \varepsilon^{-1}$ is unramified at $\lam$. This can be used 
  to further prune the set of characters obtained in 
  \cref{step:prune1} in \cref{alg:find_varepsilon}.
\end{remark}

\subsection{Computing the Determinant Character}
\label{subsec:detrho}

In this section, we give algorithms to compute the determinant character
of the $2$-dimensional subquotient representation $\rho$, and the traces
$\operatorname{tr}(\rho(\Frob_\pp))$ at good primes $\pp \subset \OK$.
Our starting point is the same as earlier. Just like the one-dimensional
subrepresentation $\varepsilon$, we know that the determinant of $\rho$
is a Hecke character of $K$ of order dividing $\ell-1$, that is
unramified outside the primes of bad reduction for $C$, which gives a
finite set of possibilities for what $\det(\rho)$ could be.
After pruning using Euler factors at sufficiently many primes, several
candidate characters usually survive. We again exploit the Galois
conjugate structure of $\rho_\lam$ and $\rho_{\lambar}$ through 
\cref{eq:varepsilon_and_det} to narrow down this list further.

\begin{algorithm}
\label{alg:find_det}
\textbf{Input:}
\begin{itemize}
    \item A Picard curve $C\colon y^3 = f_4(x)$, and the finite set $S$
    of primes of bad reduction for $C$.
    \item a prime $\ell \equiv 1 \pmod 3$ such that $\rho_\lam$ is
    possibly reducible as in \cref{eq:reducible_rholam} for some prime
    ideal $\lam$ above $\ell$.
    \item a Galois conjugate pair $\varepsilon, \varepsilon^c$ of
    characters $G_K \rightarrow \Fell^\times$ such that $\varepsilon
    \varepsilon^c = \chi_\ell$, such that one of them represents the
    $1$-dimensional irreducible subrepresentation of $\rho_\lam$ in
    \cref{eq:reducible_rholam}.
\end{itemize}
\textbf{Output:} A set $L$ of candidate Hecke characters for
$\det(\rho)$.
\begin{enumerate}
\item Construct the group $L$ of all Hecke characters $G_K \rightarrow
\Fell^{\times}$, that are unramified outside $S \cup \{\ell\}$, exactly
as in \cref{step:HeckeCharacterGroup} of \cref{alg:find_varepsilon}.
\item \label{find_det:step:prune} For each prime $p \not\in S \cup
\{3,\ell\}$ up to a chosen bound, and the corresponding polynomial $f_p$
computed in \cref{step:charpolfp} of \cref{alg:find_varepsilon}, do the
following to recursively shrink the set $L$:
\begin{enumerate}[(a)]
\item \label{find_det:step:rootsoffp} Compute the multiset $R_1$ of
roots of $f_p$ over $\F_{\ell^2}$.
\item \label{find_det:step:removeepsilonvals} Fix a prime ideal $\pp
\subset \OK$ above $p$. Remove from $R_1$ the values
$\varepsilon(\Frob_\pp)$ and $\Norm(\pp)/\varepsilon(\Frob_\pp) \pmod
\ell$; call the remaining four roots $\{r_1,r_2,r_3,r_4\}$.
\item Form $R_2 = \{r_ir_j : i < j\}$, remove two copies of $\Norm(\pp)
\pmod \ell$ from $R_2$ (corresponding to the symplectic pairing).
\item Discard from $L$ every character that does not evaluate at $\pp$
to an element of $R_2$.
\end{enumerate}
\item \label{find_det:step:identifypairs} After the pruning step in
\cref{find_det:step:prune}, a small set $L$ of characters will remain.
Identify the Galois conjugate pairs $\{\chi,\chi^c\}$ in $L$ as in 
\cref{step:identifypairs} of \cref{alg:find_varepsilon}. Discard from 
$L$ any character $\chi$ such that $\chi \chi^c \ne \chi_\ell^2$.
\item Return $L$.
\end{enumerate}
\end{algorithm}

\begin{remark}
  \label{rem:twodetchars}
  The set returned by \cref{alg:find_det} is closed under conjugation by
  $\Gal(K|\Q)$, since none of the steps in the algorithm distinguish
  conjugate characters. If the $\lambda$-torsion representation
  $\rho_\lam$ of $\Jac(C)$ is indeed reducible and $\rho$ is as defined
  in \cref{eq:reducible_rholam}, then the output of \cref{alg:find_det}
  is guaranteed to contain at least two characters: $\det(\rho)$ and
  $\det(\rho)^c$. Note that the latter $\det(\rho)^c$ is the determinant
  of the $2$-dimensional subrepresentation of $\rho_{\lambar}$ dual to
  $\rho$. Hence these two characters cannot be told apart from each
  other just using Euler factors.
\end{remark}

\subsection{Computing the Trace}
\label{subsec:tracerho}
 
Once we have the character $\varepsilon$, and the determinant character
$\sigma = \det(\rho)$, we can compute the values of the trace function
$\tr(\rho)$ at $\Frob_\pp$ for various primes $\pp$ in $K$ of good
reduction for $C$. We have developed two methods for obtaining this trace. 
They are not guaranteed to succeed for \emph{all} primes $\pp$, and
\cref{prop:method1_succeeds_1overn,prop:method2_returns_atmosttwotraces}
exactly capture the failures. In particular, if $\sigma = \det(\rho)$ is
the mod-$\ell$ cyclotomic character, then both methods always succeed.
\cref{ex:C3C4_rhocharpols} is one such favourable instance.

\begin{algorithm}
\label{alg:find_trace_1}
\textbf{Input:}
\begin{itemize}
    \item A Picard curve $C\colon y^3 = f_4(x)$.
    \item a prime $\ell \equiv 1 \pmod 3$ such that $\rho_\lam$ is
    reducible as in \cref{eq:reducible_rholam} for some prime ideal
    $\lam$ above $\ell$.
    \item a character $\varepsilon: G_K \rightarrow \Fell^\times$ such
    that $\varepsilon \varepsilon^c = \chi_\ell$, representing the
    $1$-dimensional subrepresentation of $\rho_\lam$ in
    \cref{eq:reducible_rholam}.
    \item a character $\sigma: G_K \rightarrow \Fell^\times$ such that
    $\sigma \sigma^c = \chi_\ell^2$, representing $\det(\rho)$ where
    $\rho$ is the $2$-dimensional quotient representation of $\rho_\lam$
    in \cref{eq:reducible_rholam}.
    \item a prime ideal $\pp$ in $K$ above a prime $p \ne 3, \ell$ of
    good reduction for $C$.
\end{itemize}
\textbf{Output:} the value $\tr(\rho(\Frob_\pp)) \in \Fell$, or false.\\
\textbf{Method 1.}
\begin{enumerate}
\item Compute the characteristic polynomial $f_p$ as in
\cref{step:charpolfp} of \cref{alg:find_varepsilon}.
\item Let $a_5$ be the coefficient of $x^5$ in $f_p$ (i.e.\ the negative
of the sum of all roots).
\item If $\sigma(\pp) \equiv -\Norm(\pp) \pmod{\ell}$, return false.
\item Otherwise, let $A = \varepsilon(\pp)$ and $B = \sigma(\pp)$, and
return the trace $t_\pp = -(a_5 + A + \Norm(\pp)/A) \cdot (1 +
\Norm(\pp)/B)^{-1}$.
\end{enumerate}
\end{algorithm}

\begin{algorithm}
\label{alg:find_trace_2}
\textbf{Input:} Same as in \cref{alg:find_trace_1}.\\
\textbf{Output:} a list $T$ of candidate values for
$\tr(\rho(\Frob_\pp)) \in \Fell$.\\
\textbf{Method 2.}
\begin{enumerate}
\item Compute the characteristic polynomial $f_p$ as in
\cref{step:charpolfp} of \cref{alg:find_varepsilon}.
\item Form the multiset $R_1$ and eliminate roots as in
\cref{find_det:step:rootsoffp,find_det:step:removeepsilonvals} of
\cref{alg:find_det} so that the multiset $R_1$ has $4$ elements $r_1,
\ldots, r_4$.
\item For each pair $r_i,r_j \in R$ with $i < j$ satisfying $r_ir_j =
\sigma(\pp)$, record $r_i + r_j$ in a set $T$. Return $T$.
\end{enumerate}
\end{algorithm}

\begin{prop}
  \label{prop:method1_succeeds_1overn}
  With notation as in \cref{alg:find_trace_1}, fix a prime ideal $\pp
  \subset \OK$ above a good prime $p \notin S \cup \{3,\ell\}$.
  \begin{enumerate}
    \item Whenever $\sigma(\pp) \ne -\Norm(\pp)$,
    \cref{alg:find_trace_1} returns the correct value $t_\pp =
    \tr(\rho(\Frob_\pp))$.
    \item Let $n$ denote the order of the character $\psi :=
    \sigma\chi_\ell^{-1}$. If $2 \nmid n$, then \cref{alg:find_trace_1}
    always returns the correct trace. If $2|n$, then the density of
    primes $\pp$ for which \cref{alg:find_trace_1} returns false is
    $1/n$.
  \end{enumerate}
\end{prop}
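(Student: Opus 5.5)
The plan is to make explicit the six roots of $f_p$ in terms of $\varepsilon$, $\sigma$ and the trace of $\rho$, and read off $t_\pp$ from the coefficient $a_5$.

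Since $p \notin S \cup \{3,\ell\}$, all constituents are unramified at $\pp$. By \cref{eq:reducible_rholam} and \cref{eq:rhoell_decomp}, the multiset of roots of $f_p$ is the union of three pieces:
\begin{itemize}
\item the value $A=\varepsilon(\pp)$;
\item the two eigenvalues $\alpha,\beta$ of $\rho(\Frob_\pp)$, with $\alpha+\beta=t_\pp$ and $\alpha\beta=B=\sigma(\pp)$;
\item the eigenvalues of $\chi_\ell\otimes\varepsilon^{-1}$ and $\chi_\ell\otimes\rho^{-T}$ at $\Frob_\pp$, namely $\Norm(\pp)/A$, $\Norm(\pp)/\alpha$ and $\Norm(\pp)/\beta$.
\end{itemize}
For $p$ inert, $\Frob_\pp=\Frob_p^2$ and $\Norm(\pp)=p^2$, consistent with the definition of $f_p$. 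Here I use $\chi_\ell(\Frob_\pp)=\Norm(\pp)\bmod\ell$.

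Summing the roots gives
\[
-a_5 = A + \Norm(\pp)/A + t_\pp + \Norm(\pp)\frac{\alpha+\beta}{\alpha\beta},
\]
so
\[
-a_5 - A - \Norm(\pp)/A = t_\pp\bigl(1+\Norm(\pp)/B\bigr).
\]
When $B\ne -\Norm(\pp)$ the factor $1+\Norm(\pp)/B$ is invertible, and the formula returned by \cref{alg:find_trace_1} is exactly $t_\pp$. This proves the first part. Only the given $\varepsilon$ and $\sigma$ are used, and no irreducibility of $\rho$ is needed.

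For the second part, the failure condition $\sigma(\pp)=-\Norm(\pp)$ reads $\psi(\Frob_\pp)=-1$ with $\psi=\sigma\chi_\ell^{-1}$. Since $\psi$ is a character of $G_K$ of order $n$, its image is the cyclic group $\mu_n\subset\Fell^\times$. Then $-1$ lies in the image if and only if $2\mid n$.
\begin{itemize}
\item If $n$ is odd, failure never occurs and the algorithm always returns the correct trace.
\item If $n$ is even, apply Chebotarev's density theorem to the cyclic extension of $K$ cut out by $\psi$, with Galois group $\cong\Z/n$. The primes $\pp$ whose Frobenius maps to the unique element of order $2$ (namely $\psi(\Frob_\pp)=-1$) have density $1/n$.
\end{itemize}
Excluding the finitely many primes in $S\cup\{3,\ell\}$ does not affect the density.

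The main subtlety is keeping the inert case straight. Here $f_p$ is built from $\Frob_p^2$, which equals $\Frob_\pp$ for the unique $\pp\mid p$, so the density is still computed over primes $\pp$ of $K$. Beyond that the argument is routine.
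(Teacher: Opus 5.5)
Your proposal is correct and follows the paper's proof essentially step for step. Like the paper, you list the six roots $A,\alpha,\beta,\Norm(\pp)/A,\Norm(\pp)/\alpha,\Norm(\pp)/\beta$, read off $a_5=-(A+\Norm(\pp)/A)-t_\pp(1+\Norm(\pp)/B)$, rewrite the failure condition as $\psi(\Frob_\pp)=-1$, and apply Chebotarev to the cyclic extension cut out by $\psi$. The only cosmetic difference is that you take the last three roots directly from the Weil-pairing dual $\chi_\ell\otimes\rho_\lam^{-T}$ rather than from the conjugates $\varepsilon^c,\rho^c$, so the hypothesis $\varepsilon\varepsilon^c=\chi_\ell$ is not used at this step.
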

\begin{proof}
  Let $\lambda_1,\lambda_2 \in \Fellbar$ be the eigenvalues of
  $\rho(\Frob_\pp)$, so that $t_\pp = \tr(\rho(\Frob_\pp)) =
  \lambda_1+\lambda_2$ and $B = \sigma(\pp) = \det(\rho(\Frob_\pp)) =
  \lambda_1\lambda_2$. By \cref{eq:rhoell_decomp,eq:reducible_rholam},
  the six roots of $f_p$ are the eigenvalues of $\varepsilon, \rho,
  \varepsilon^c, \rho^c \simeq \chi_\ell \otimes \rho^{-T}$ at
  $\Frob_\pp$, namely
  \[
    A=\varepsilon(\pp), \quad \lambda_1, \quad \lambda_2, \quad N/A =
    N/\varepsilon(\pp), \quad N/\lambda_1, \quad N/\lambda_2, 
    \quad \text{where } N = \Norm(\pp)
  \]
  Hence $a_5$, the negative of the sum of these roots, satisfies
  \[
    a_5 = -\Big(A + \frac{N}{A}\Big) - t_\pp - \frac{N}{\lambda_1} -
    \frac{N}{\lambda_2}
        = -\Big(A+\frac{N}{A}\Big) - t_\pp \Big(1+\frac{N}{B}\Big),
  \]
  and rearranging leads to the given formula for $t_\pp$ whenever $B \ne
  -N$.

  For the second statement, we first note that $\sigma(\pp) =
  -\Norm(\pp) = -\chi_\ell(\pp)$ is equivalent to $\psi(\pp) = -1$.
  Being valued in $\Fell^\times$, if the order of $\psi$ is $n$, then
  its image is the cyclic group $\mu_n \subset \Fell^\times$. If $2
  \nmid n$, then $-1 \notin \mu_n$, so the condition $\psi(\pp)=-1$
  never holds. If $2|n$ so that $-1 \in \mu_n$, then by the Chebotarev
  density theorem applied to the finite abelian extension of $K$ cut out
  by $\psi$, the set of primes $\pp$ with $\psi(\pp) = -1$ has density
  exactly $1/n$.
\end{proof}

\begin{prop}
  \label{prop:method2_returns_atmosttwotraces}
  With notation as in \cref{alg:find_trace_1,alg:find_trace_2}, fix a
  prime ideal $\pp \subset \OK$ above a good prime $p \notin S \cup
  \{3,\ell\}$, and let $t = \tr(\rho(\Frob_\pp))$. Then the set $T$
  returned by \cref{alg:find_trace_2} satisfies $\{t\} \subseteq T
  \subseteq \{t,-t\}$. Moreover if $|T| = 2$, then $\sigma(\pp) =
  -\Norm(\pp)$, and $T = \{ \tr(\rho(\Frob_\pp)),
  \tr(\rho(\Frob_{\ppbar})) \}$; in particular $p$ is split in $K$.
\end{prop}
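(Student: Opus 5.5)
We first write down the multiset $R_1$ explicitly, as in the proof of \cref{prop:method1_succeeds_1overn}. Write $N=\Norm(\pp)$, $A=\varepsilon(\pp)$, and let $\lambda_1,\lambda_2$ be the eigenvalues of $\rho(\Frob_\pp)$. The six roots of $f_p$ are then $A,\lambda_1,\lambda_2,N/A,N/\lambda_1,N/\lambda_2$. After removing $A$ and $N/A$, the remaining four roots are
\[
r_1=\lambda_1,\quad r_2=\lambda_2,\quad r_3=N/\lambda_1,\quad r_4=N/\lambda_2 .
\]
If some of these values coincide with $A$ or $N/A$, removal from the multiset still leaves the same multiset. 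Since $\sigma(\pp)=\lambda_1\lambda_2$, the pair $\{r_1,r_2\}$ always satisfies the test, so $t=\lambda_1+\lambda_2\in T$.

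Next we go through the other five index pairs $\{i,j\}$ and compare $r_ir_j$ with $B=\lambda_1\lambda_2$.

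The pairs $\{1,3\}$ and $\{2,4\}$ have product $N$. They pass only if $B=N$. In that case $N/\lambda_1=\lambda_2$ and $N/\lambda_2=\lambda_1$, so the recorded sum is again $\lambda_1+\lambda_2=t$.

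The pair $\{3,4\}$ has product $N^2/B$. It passes only if $B^2=N^2$, that is $B=\pm N$. Its sum is $N(\lambda_1+\lambda_2)/B = Nt/B$, which equals $t$ if $B=N$ and $-t$ if $B=-N$.

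The pair $\{1,4\}$ has product $N\lambda_1/\lambda_2$. Setting this equal to $\lambda_1\lambda_2$ gives $\lambda_2^2=N$, and then $r_1+r_4=\lambda_1+\lambda_2=t$. The pair $\{2,3\}$ is symmetric. Since the sums are recorded in a set, we conclude $\{t\}\subseteq T\subseteq\{t,-t\}$.

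For the second part, suppose $|T|=2$. Then $t\neq -t$, so $t\ne0$. From the case analysis, the only source of $-t$ is the pair $\{3,4\}$ with $B=-N$, so $\sigma(\pp)=-\Norm(\pp)$. It remains to identify $-t$ with $\tr\rho(\Frob_{\ppbar})$. We will use $\rho^c\simeq\psi\otimes\rho$ from \cref{eq:hyp}, with $\psi=\chi_\ell\sigma^{-1}$. Evaluating at $\Frob_\pp$ gives
\[
\tr\rho(\Frob_{\ppbar})=\psi(\pp)\,t=(N/B)\,t=-t,
\]
using $\rho^c(\Frob_\pp)\sim\rho(\Frob_{\ppbar})$. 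This step needs irreducibility of $\rho$. In the reducible case we instead use the direct relation $\rho^{c,ss}\simeq(\chi_\ell\rho^{-T})^{ss}$ coming from \cref{eq:isom_conjrep_dualrep}, which holds when $\varepsilon\varepsilon^c=\chi_\ell$ as assumed in the input. Its trace at $\Frob_\pp$ is $N(\lambda_1^{-1}+\lambda_2^{-1})=Nt/B=-t$ in either case, so this route avoids the twist relation altogether and is the cleaner one to use.

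Finally, suppose $p$ were inert. Then $\ppbar=\pp$, so $t=-t$, which forces $t=0$ and $|T|=1$, a contradiction. Hence $p$ is split in $K$.

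The main obstacle is bookkeeping: handling multiset coincidences when the removed roots $A, N/A$ equal some $\lambda_i$ or $N/\lambda_i$, and, for the identification of $-t$, making sure we use the semisimplified conjugate-dual relation rather than the irreducible case only.
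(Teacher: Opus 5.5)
Your proof is correct and follows essentially the same route as the paper. You identify $R_1=\{\lambda_1,\lambda_2,N/\lambda_1,N/\lambda_2\}$, check each pair to see that only $\{r_3,r_4\}$ can contribute $Nt/\sigma(\pp)=-t$ (forcing $\sigma(\pp)=-N$), and match $r_3+r_4$ with $\tr\rho(\Frob_{\ppbar})=\tr\rho^c(\Frob_\pp)$. Beyond the paper, you also handle multiset coincidences, obtain $\rho^{c,ss}\simeq(\chi_\ell\otimes\rho^{-T})^{ss}$ from $\varepsilon\varepsilon^c=\chi_\ell$ so that irreducibility of $\rho$ is not needed, and spell out why $p$ must split, which are points the paper leaves implicit.
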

\begin{proof}
  Taking $\lambda_1,\lambda_2 \in \Fellbar$ to be the eigenvalues of
  $\rho(\Frob_\pp)$, and letting $N = \Norm(\pp)$ just as in the proof
  of \cref{prop:method1_succeeds_1overn}, the six roots of $f_p$ are
  $\varepsilon(\pp), \lambda_1, \lambda_2, N/\varepsilon(\pp),
  N/\lambda_1, N/\lambda_2$. After removing $\varepsilon(\pp)$ and
  $N/\varepsilon(\pp)$, the multiset $R_1$ of \cref{alg:find_trace_2} is
  exactly $\{\lambda_1,\lambda_2,N/\lambda_1,N/\lambda_2\}$; label it so
  $r_1 = \lambda_1$, $r_2 = \lambda_2$, $r_3 = N/\lambda_1$, $r_4 =
  N/\lambda_2$. For each of the $6$ pairs $\{r_i,r_j\}$, we check
  whether $r_ir_j = \sigma(\pp) = \lambda_1\lambda_2$, and if so, what
  the value of $r_i + r_j$ is:
  \begin{itemize}
    \item $\{r_1,r_2\}$: the product is $\lambda_1\lambda_2 =
    \sigma(\pp)$ always, and $r_1+r_2 = t$.
    \item $\{r_i,r_j\}$ where $1$ or $2$ belongs to $\{i,j\}$: without
    loss of generality, assume $i=1$. If the product $r_1 r_j =
    \sigma(\pp) = r_1 r_2$, then $r_j = r_2$ and hence the sum $r_1 +
    r_j = r_1 + r_2 = t$.
    \item $\{r_3,r_4\}$: the product is $N^2/\lambda_1\lambda_2 =
    N^2/\sigma(\pp)$, which equals $\sigma(\pp)$ exactly when
    $\sigma(\pp)^2 = N^2$. When it does, $r_3+r_4 =
    N(\lambda_1+\lambda_2)/\sigma(\pp) = Nt/\sigma(\pp) = \pm t$,
    according to whether $\sigma(\pp) = N$ or $\sigma(\pp) = -N$.
  \end{itemize}
  The value $-t$ can only arise from the pair $\{r_3,r_4\}$, and only
  when $\sigma(\pp) = -N$. The last assertion follows from the fact that
  $\tr(\rho(\Frob_{\ppbar})) = \tr(\rho^c(\Frob_\pp)) = N/\lambda_1 +
  N/\lambda_2 = r_3 + r_4 = N t/\sigma(\pp)$.
\end{proof}

\subsection{Ruling out irreducible $2$-dimensional subquotients}
\label{subsec:ruleoutirreduciblerho}

The criterion developed in \cref{sec:tracetest} turns into a 
practical test: enumerate the finite set of quadratic extensions 
$K'=K(\sqrt{d})$ where $d$ is a squarefree integer divisible 
only by the primes in $S \cup \{3,\ell\}$, and try to eliminate 
each of them using the computed Euler factors. This is our next 
algorithm. If all of them are eliminated, then 
alternative \labelcref{item:induced} of 
\cref{prop:inducedorsixchars} cannot hold, and we can conclude 
that the $2$-dimensional quotient $\rho$ of $\rho_\lambda$ is 
in fact reducible.

\begin{algorithm}
\label{alg:tracetest}
\textbf{Input:}
\begin{itemize}
    \item A Picard curve $C\colon y^3=f_4(x)$, the finite set $S$ of
    primes of bad reduction, and a prime $\ell\equiv1\pmod3$ such that
    $\rho_\lam$ is possibly reducible as in \cref{eq:reducible_rholam}. 
    We assume that the hypotheses of \cref{prop:inducedorsixchars} are satisfied.
    \item A block of characters $\{\varepsilon,\varepsilon^c\}$ of 
    type \labelcref{item:conjandualpair} returned by
    \cref{alg:find_varepsilon}, and the polynomials $f_p$ at the good
    primes up to a chosen bound.
\end{itemize}
\textbf{Output:} Either, under the hypotheses of
\cref{prop:inducedorsixchars}, the assertion that $\rhoellKss$ is a
direct sum of six characters, or the list of quadratic extensions
$K'=K(\sqrt d)$ that the data fail to eliminate.
\begin{enumerate}
\item \label{step:candidateKprime} Let $D$ be a set of representatives
for the squarefree integers $d \ne 1$ whose prime factors lie in $S \cup
\{3,\ell\}$, modulo the relation identifying $d$ with the squarefree
part of $-3d$. 
\item \label{step:eliminate} For each $d \in D$ and each good prime $p
\equiv 1 \pmod 3$, $p \not\in S \cup \{\ell\}$, with $\legendre{d}{p} =
-1$, choose a prime $\pp$ of $K$ above $p$ and discard $d$ unless the
quotient of $f_p$ by $(T-\varepsilon(\pp))(T-\varepsilon^c(\pp))$ is an 
even polynomial.
\item If every $d \in D$ is discarded, return the assertion that 
$\rho_\ell|_{G_K}^{ss}$ is a direct sum of six characters.
\item Otherwise, return the set of quadratic extensions $K(\sqrt{d})$ for 
the surviving $d$.
\end{enumerate}
\end{algorithm}

\section{Computational Results}
\label{sec:results}
 
We apply the above algorithms to a list of $381$ Picard curves $C\colon
y^3 = f_4(x)$, for which the $7$-torsion Galois representation is
possibly reducible, i.e., possibly has the block-matrix shape as in
\cref{eq:reducible_rholam}. These curves were found in \cite{CG26}, 
and they come from Sutherland's dataset of roughly 3 million Picard 
curves of $7$-smooth conductor.

For clarity, we distinguish the following two hypotheses on a
curve in our dataset:
\begin{enumerate}[label=(\Alph*), start=18]
    \item \label[hypothesis]{hypothesis1} the representation $\rho_\lam$
    is reducible for some prime $\lam$ of $K=\Q(\zeta_3)$ above
    $\ell$.
    \item \label[hypothesis]{hypothesis2} for every prime $p$ of good
    reduction, the characteristic polynomial $f_p$ (defined in
    \cref{sec:tracetest}) splits completely over $\Fell$.
\end{enumerate}

\subsection{Results on the computation of the character $\varepsilon$}
\label{subsec:results_epsilon1}

For each of these curves $C$, we first compute candidate characters
$\varepsilon$ appearing in $\rhoellKss$ as outlined by the general
algorithm mentioned at the start of \cref{sec:algorithms}. For each 
curve, we considered Euler factors at the first $500$ good primes. For 
all $381$ curves and the good primes $p$ considered, the characteristic 
polynomial $f_p$ computed in \cref{step:prune1} of 
\cref{alg:find_varepsilon} completely factors as a product of $6$ linear 
factors over $\Fell$. 
For $376$ out of the $381$ curves, $6$ characters are found and they 
come in conjugate pairs. For $4$ of the remaining curves (given in
\cref{ex:fourcurves_selfconjchars}), $4$ characters are found two of
which are self-Galois conjugate. The last remaining curve is given in
\cref{ex:eightcharscurve}.

\begin{ex}
    \label{ex:fourcurves_selfconjchars}
    For the four Picard curves, 
    \begin{equation}
    \begin{alignedat}{2}
    \label{eq:selfconjchars_fourcurves}
      C_1:\;& y^3 = f_1(x) = 12x^4 + 8x^3 - 48x^2 + 36x - 7.\\
      C_3:\;& y^3 = f_3(x) = 648x^4 + 289x^3 + 42x^2 + 2x.\\
      C_2:\;& y^3 = f_2(x) = 625x^4 + 550x^3 + 126x^2 - 28x - 4.\\
      C_4:\;& y^3 = f_4(x) = 1458x^4 + 557x^3 + 129x^2 + 15x + 1.
    \end{alignedat}
    \end{equation}
    we obtain exactly four candidate characters that can possibly 
    appear in $\rhoellKss$. 
    In each case, exactly one of them is unramified at
    both primes above $\ell$. The conductor of this character is
    $24(1-\zeta_3)\OK$ and $60(1-\zeta_3)\OK$ for $C_1$ and $C_2$
    respectively. For $C_3$ and $C_4$, it is the trivial character.
\end{ex}

\begin{ex}
    \label{ex:eightcharscurve}
    For the curve $C_5 \colon y^3 = 27 x^4 + 36 x^3 - 396 x^2 + 504 x +
    588$, eight candidate characters are found to possibly
    appear in $\rhoellKss$.
\end{ex}

\cref{tab:characters_left} breaks down, for each curve, how many of its
candidate characters $\varepsilon$ are unramified at both primes
$\lam,\lambar$ above $\ell$, ramified at only one of the two, or
ramified at both.
Using \cite{BKSW20} we manage to successfully compute the conductor for 
$369$ out of the $381$ curves in our dataset. Among them $109$ curves 
have good reduction at $\ell=7$. While it is in principle possible that some 
spurious characters may be eliminated using \cref{rem:Raynaud} when the 
Jacobian has good reduction at $\ell$, this does not happen for any of 
these curves.

\begin{center}
\begin{table}[h!]
\begin{center}
\caption{Ramification of the candidate characters $\varepsilon$ possibly
appearing in $\rho_{\Jac(C),\ell}|_{G_K}^{ss}$,
at the two primes $\lam,\lambar$ above $\ell = 7$, after considering
Euler
factors at the first $500$ good primes. 
}
\label{tab:characters_left}
\begin{tabular}{|p{1.5cm}|p{1.5cm}|p{1.5cm}|p{1.5cm}|p{1.6cm}|p{1.5cm}|}
\hline
Unram.\ \newline at $\lam,\lambar$ & Ram.\ \newline only at $\lam$ & 
Ram.\ \newline only at $\lambar$ & Ram.\ \newline at $\lam,\lambar$ & 
Total \newline characters & Number \newline of curves\\
\hline
1 & 1 & 1 & 1 & 4 & 3\\
1 & 0 & 0 & 3 & 4 & 1\\
\hline
2 & 1 & 1 & 2 & 6 & 107\\
2 & 0 & 0 & 4 & 6 & 5\\
0 & 1 & 1 & 4 & 6 & 28\\
0 & 2 & 2 & 2 & 6 & 75\\
0 & 3 & 3 & 0 & 6 & 50\\
0 & 0 & 0 & 6 & 6 & 111\\
\hline
0 & 4 & 4 & 0 & 8 & 1\\
\hline
\multicolumn{5}{|r|}{Total} & 381\\
\hline
\end{tabular}
\end{center}
\end{table}
\end{center}

It turns out that for all but the curve $C_5$ of \cref{ex:eightcharscurve},
exploiting the Galois conjugate structure of the representations 
does \emph{not} lead to more refined results. That is, for $380$ out of
$381$ curves, \cref{alg:find_varepsilon} does not get rid of any 
character from the list of $6$ candidate characters already obtained.
It merely partitions the list of $6$ candidate characters into one block 
each of types \labelcref{item:conjandualpair} and 
\labelcref{item:4tuple}. So if $\rho_\lam$ is indeed reducible, the 
possibilities for the set of characters appearing in $\rhoellKss$ are: 
the type \labelcref{item:conjandualpair} block alone, or its union with 
the type \labelcref{item:4tuple} block. (A type \labelcref{item:4tuple} 
block cannot occur by itself because \cref{cor:completereducibility} 
will then force complete decomposition of $\rho_\ell|_{G_K}^{ss}$.) 
For the curve $C_5$ of \cref{ex:eightcharscurve}, in addition to the two 
possibilities mentioned above, we get another distinct type 
\labelcref{item:conjandualpair} block that is incompatible with
any type \labelcref{item:4tuple} block. 
Consequently, for each of these $381$ curves, we cannot rule out the 
possibility that $\rhoellKss$ decomposes completely as a direct sum of 
$6$ characters, i.e., that $\rho$ is in fact reducible.
The full list of compatible combinations of characters returned by 
\cref{alg:find_varepsilon}, for all $381$ curves, is available in the 
file \texttt{allcharsoutput.txt} in the accompanying code repository 
\cite{ReducibleRepsPicardGithub}.

For the curves $C_3, C_4$ mentioned in \cref{ex:fourcurves_selfconjchars},
we find that the type \labelcref{item:4tuple} block is 
$\{ \mathds{1}, \mathds{1}, \chi_\ell, \chi_\ell \}$. We prove it by
finding two independent $7$-torsion points on their Jacobians over 
$\Q(\zeta_3)$.
\begin{ex}
  \label{ex:C3C4_fullydecomposedrepresentation}
  For the curves $C_3$ and $C_4$ in \cref{ex:fourcurves_selfconjchars}, 
  the output of \cref{alg:find_varepsilon} suggests:
  \begin{align*}
    \rhoellKss \simeq \varepsilon \oplus \varepsilon^c \oplus 
    \mathds{1} \oplus \mathds{1} \oplus \chi_\ell \oplus \chi_\ell
    \quad \text{for some character } \varepsilon \text{ with } 
    \varepsilon\varepsilon^c = \chi_\ell.
  \end{align*}
  We compute that 
  $\Jac(C_4)(\Q)[7] \ne \varnothing$. The degree $0$
  divisor $T = [(0,1) - \infty]$, where $\infty$ is the point at
  infinity on $C_4$, is a $7$-torsion point on the Jacobian:
  since $(28x^2+7x+1)^3 - (2x+1)^3 f_4(x) = -11664x^7$, the divisor $7
  T$ is the principal divisor coming from the function $h = (2x+1)y -
  (28x^2+7x+1)$. Consequently, $[(0,\zeta_3)-\infty]$ is another
  independent $7$-torsion point, leading to $(\Z/7)^2 \subseteq
  \Jac(C_4)(\Q(\zeta_3))[7]$.

  \noindent We also find by computation that in fact
  $\Z/3 \times \Z/21 \subseteq \Jac(C_3)(\Q)$. The degree $0$ divisor
  $T = [(-1/6,-1/6) - \infty]$, where $\infty$ is the point at infinity
  on $C_3$, is a $21$-torsion point on the Jacobian. A witness
  is the function $h = A_2 y^2 + A_1 y + A_0$, where
  \begin{align*}
  A_2 &= 9(4032x^4+288x^3-543x^2-128x-8),\\
  A_1 &= -3(131328x^5+73872x^4+13023x^3+330x^2-114x-8), \\
  A_0 &= -4(6x+1)(46656x^6-75816x^5-58509x^4-14022x^3-1350x^2-36x+1).
  \end{align*} 
  It is easy to verify that the resultant $\Res_y(h, y^3-f_3(x)) =
  -64(6x+1)^{21}$ and $\divisor(h) = 21 T$. The automorphism $[\zeta_3]$
  leads to another independent $21$-torsion point over $\Q(\zeta_3)$,
  leading to $(\Z/21)^2 \subseteq \Jac(C_3)(\Q(\zeta_3))$.

  \noindent Thus $\mathds{1}$ occurs with multiplicity at least $2$ in
  $\rhoellKss$. Since $\rhoellKss$ is self-dual up to twist by
  $\chi_\ell$, the character $\chi_\ell$ also occurs with multiplicity
  at least $2$. This verifies the type \labelcref{item:4tuple} block
  $\{\mathds{1},\mathds{1},\chi_\ell,\chi_\ell\}$
  predicted by \cref{alg:find_varepsilon}.
\end{ex}

The preceding computations determine the possible
$1$-dimensional Jordan--H\"older constituents of $\rhoellKss$.
We now record the computations of the determinant
and Frobenius traces of the complementary $2$-dimensional
subquotient $\rho$. This data will be used in
\cref{subsec:results_ell13} to detect elements of large
projective order.

\subsection{Determinants and Frobenius traces of the
\texorpdfstring{$2$}{2}-dimensional subquotient}
\label{subsec:results_tracedetrho}

We recall that for all but $1$ curve in the dataset, a unique Galois
conjugate pair (type \labelcref{item:conjandualpair}) of characters
$\varepsilon, \varepsilon^c$ was found by \cref{alg:find_varepsilon}, as 
noted in \cref{subsec:results_epsilon1}. We ran \cref{alg:find_det} for
these $380$ curves with the corresponding characters, using Euler
factors at the first $500$ good primes. For $349$ curves,
\cref{alg:find_det} returns a unique pair of Galois conjugate characters
$\sigma, \sigma^c$ such that $\sigma \sigma^c = \chi_\ell^2$,
and hence determines $\det(\rho)$ up to Galois conjugation. 
We note that the naive approach only using the pruning step in 
\cref{find_det:step:prune} of \cref{alg:find_det} always leads to a set 
of $3$ or $4$ candidate characters for $\det(\rho)$. So using the Galois 
conjugate information as in \cref{find_det:step:identifypairs} is crucial
here. For the remaining $31$ curves, even this is not enough, and two 
such Galois conjugate pairs survive.

Whenever \cref{alg:find_det} succeeds in determining a unique pair
$\sigma, \sigma^c$ of conjugate characters for $\det(\rho),
\det(\rho^c)$, assuming $\sigma = \det(\rho)$, \cref{alg:find_trace_1}
finds the trace of $\rho$ at $\Frob_\pp$ for any prime $\pp$ in $K$ such
that $\sigma(\Frob_\pp) \ne -\Norm(\pp)$. Alternatively,
\cref{alg:find_trace_2} always works, and when it does not return a
unique value, the output is the pair $\{\tr(\rho(\Frob_\pp)),
\tr(\rho(\Frob_{\ppbar}))\}$, and it is necessarily of the form
$\{t,-t\}$ for some $t \in \Fell$, by
\cref{prop:method2_returns_atmosttwotraces}. 

\begin{ex}
  \label{ex:C3C4_rhocharpols}
  For the curves $C_3$ and $C_4$ from 
  \cref{ex:fourcurves_selfconjchars}, 
  \cref{alg:find_det} returns a unique self-Galois 
  conjugate character for $\det(\rho)$ and it is the $7$-torsion 
  cyclotomic character $\chi_7$. Therefore, by 
  \cref{prop:method1_succeeds_1overn}, we know that
  \cref{alg:find_trace_1} finds the trace of $\rho(\Frob_\pp)$ for any
  prime ideal $\pp$ above a prime $p \ne 3,7$ of good reduction for
  $C_3$, and furthermore $\tr(\rho(\Frob_\pp)) =
  \tr(\rho(\Frob_{\ppbar}))$. We computed the traces for the first $500$
  rational primes $p$, and in this range, the trace at $\pp$ is always 
  $1+\Norm(\pp)$, in line with what is shown in
  \cref{ex:C3C4_fullydecomposedrepresentation}.
\end{ex}

\begin{ex}
  \label{ex:C5_rhocharpols}
  Consider the curve $C_6: y^3 = 2x^4 + 4x^3 + 8x^2 + 4x + 2$.
  \cref{alg:find_det} returns a unique self-Galois conjugate character
  for $\det(\rho)$. It is the product of the quadratic character $\chi :
  G_K \rightarrow \{ \pm 1 \}$ cutting out the extension $K(i)|K$, and
  the $7$-torsion cyclotomic character $\chi_7$. Therefore, by
  \cref{prop:method2_returns_atmosttwotraces}, we know that
  \cref{alg:find_trace_2} finds the trace of $\rho(\Frob_\pp)$ exactly
  for half of all prime ideals $\pp$ above primes $p \ne 3,7$ of good
  reduction, and when it cannot find the trace exactly, it finds the set
  $\{ \tr(\rho(\Frob_\pp)), \tr(\rho(\Frob_{\ppbar})) \} = \{ t, -t \}$.
  We computed this data for the first $500$ rational primes $p$, which
  is stored in \texttt{primesdetstraces\_2ndcurve.txt}.
  \cref{tab:Trace_Polys_C5} displays this data for the first few primes.
\end{ex}

\begin{center}
\begin{table}[h!]
\begin{center}
\caption{The set of characteristic polynomials of $\rho(\Frob_\pp)$ as
$\pp$
varies over all primes in $\Q(\zeta_3)$ 
above a good prime $p \ne 3,7$, for the curve $C_6$ in
\cref{ex:C5_rhocharpols}.}
\label{tab:Trace_Polys_C5}
\begin{tabular}{|c|c||c|c|}
\hline
$p$ & The set of charpolys $\rho(\Frob_\pp)$ for $\pp|p$ & $p$ & The set
of
charpolys $\rho(\Frob_\pp)$ for $\pp|p$ \\
\hline
$11$ & $x^2+4x+2$ & $53$ & $x^2+3x+2$ \\
$13$ & $x^2+6$    & $59$ & $x^2+4x+2$ \\
$17$ & $x^2+3x+2$ & $61$ & $x^2+x+5$ \\
$19$ & $\{x^2+4x+2, x^2+3x+2\}$ & $67$ & $\{x^2+4x+3, x^2+3x+3\}$ \\
$23$ & $x^2+2x+4$ & $71$ & $x^2+5x+1$ \\
$29$ & $x^2+6x+1$ & $73$ & $x^2+4x+3$ \\
$31$ & $\{x^2+5x+4, x^2+2x+4\}$ & $79$ & $\{x^2+6x+5,x^2+x+5\}$ \\
$37$ & $x^2+4x+2$ & $83$ & $x^2+5x+1$ \\
$41$ & $x^2+2x+1$ & $89$ & $x^2+5x+4$ \\
$43$ & $x^2+6$    & $97$ & $x^2+6$ \\
$47$ & $x^2+2x+4$ & $101$ & $x^2+3x+2$ \\
\hline
\end{tabular}
\end{center}
\end{table}
\end{center}

For $\ell=7$, the computations above are not needed to prove the
complete decomposition result of the next section.
\cref{alg:tracetest} and \cref{prop:inducedorsixchars} apply directly. 
Their role becomes essential for $\ell=13$, where the hypothesis of
\cref{lem:splittorusnormaliser} is unavailable and we instead use
the projective-order criterion of \cref{lem:projorder}.

\subsection{Complete decomposition for \texorpdfstring{$\ell=7$}{ell=7}}
\label{subsec:results_tracetest}

\begin{thm}
\label{thm:ell7_complete_decomposition}
Let $C$ be one of the $381$ Picard curves considered above, and let 
$\ell=7$. 
Suppose that both \cref{hypothesis1,hypothesis2} hold.
Then $\rhoellKss$ is a direct sum of six characters.
\end{thm}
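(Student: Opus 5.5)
The plan is to apply \cref{prop:inducedorsixchars} with $\ell=7$. Since $7\equiv 3\pmod 4$, the projective-order hypothesis is not needed, so \cref{lem:splittorusnormaliser} supplies the dihedral structure. \cref{hypothesis2} supplies the splitting of every $f_p$, and \cref{hypothesis1} supplies the reducible shape \eqref{eq:reducible_rholam}. The proposition then says that either $\rhoellKss$ is a sum of six characters, or $\rho\simeq\Ind_{K'}^K\psi$. In the second case $K'=K(\sqrt d)$ with $d$ squarefree, supported on $S\cup\{3,7\}$, $\varepsilon\varepsilon^c=\chi_7$, and the even-quotient condition holds at every good prime $\pp$ inert in $K'/K$. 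It therefore suffices to rule out the induced alternative for every such $d$ and every admissible $\varepsilon$, curve by curve, using the Euler factors at the first $500$ good primes. This is exactly \cref{alg:tracetest}.

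The first step is to fix the possible $\varepsilon$. In the induced case $\varepsilon\varepsilon^c=\chi_7$, so $\{\varepsilon,\varepsilon^c\}$ is a type \labelcref{item:conjandualpair} block among the candidates returned by \cref{alg:find_varepsilon}. By \cref{subsec:results_epsilon1}, there is a unique such block for $380$ curves and two such blocks for $C_5$. If $\varepsilon$ were not of this form, \cref{cor:completereducibility} would already give six characters. Both blocks for $C_5$ must be tested.

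Next, for each curve I list the finite set $D$ of $d$ modulo $d\sim$ squarefree part of $-3d$. The bad primes $S$ come from the computed conductor, or for the $12$ curves without a computed conductor, from the discriminant of $f_4$ together with $2,3$. For each $d$ I search for a split good prime $p\equiv1\pmod 3$ with $\legendre dp=-1$ and compute $f_p/\bigl((T-\varepsilon(\pp))(T-\varepsilon^c(\pp))\bigr)$. If this quotient is not even, then $d$ is eliminated. This step needs care on two points. First, since $\varepsilon\ne\varepsilon^c$ in general, both primes $\pp,\ppbar$ above $p$ may be tried; the condition must hold at each inert $\pp$. Second, the inert condition is right: $p$ splits in $K$ and the primes of $K$ above $p$ are inert in $K(\sqrt d)$ exactly when $\legendre dp=-1$. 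Once every $d$ is discarded for every admissible block, case \labelcref{item:induced} fails, and case \labelcref{item:sixchars} of \cref{prop:inducedorsixchars} gives the theorem.

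The main obstacle is computational completeness. A surviving $d$ for some curve would be a genuine gap. I expect that a single inert prime usually suffices, since the quotient has four roots drawn from $\F_7^\times$ and its odd coefficients vanish only by accident. If some $d$ does survive, I would first enlarge the prime range. Failing that, I would use the extra constraint that in the induced case $\det\rho$ restricted to $G_{K'}$ equals $\psi\psi^{\tau}$, and compare it with the candidates from \cref{alg:find_det}; I might also use the trace data from \cref{alg:find_trace_1} to show that the projective image contains elements incompatible with being a subgroup of $N(T)$ split by $K'$.
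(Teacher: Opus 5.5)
You reduce correctly to \cref{prop:inducedorsixchars} and \cref{alg:tracetest}. Since $7\equiv 3\pmod 4$, \cref{lem:splittorusnormaliser} applies, both type \labelcref{item:conjandualpair} blocks of $C_5$ must be tested, and $\legendre{d}{p}=-1$ is the right inertness test. The gap is your expectation that every $d$ gets discarded. It does not happen. For $39$ of the $381$ curves, $d=3$, $7$ or $21$ survives, and enlarging the prime range cannot help.

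For such a $d$, let $\eta$ be the quadratic character of $K'/K$. Up to reindexing, the four candidate characters other than $\varepsilon,\varepsilon^c$ are $\chi_1,\chi_1\eta,\chi_3,\chi_3\eta$. So at every $\pp$ inert in $K'/K$ the quotient of $f_p$ is $(T^2-\chi_1(\pp)^2)(T^2-\chi_3(\pp)^2)$. This is even at every such prime: the vanishing of the odd coefficients is structural, not accidental.

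Your fallbacks do not get past this.
\begin{itemize}
\item The four remaining roots sum to $0$ at such $\pp$, so the Method 1 trace there is $0$, exactly what induction from $K'$ predicts.
\item The relation $(\det\rho)|_{G_{K'}}=\psi\psi^\tau$, with $\tau$ generating $\Gal(K'/K)$, involves the unknown $\psi$. It cannot be tested against the output of \cref{alg:find_det} without first knowing which characters of $G_{K'}$ could play the role of $\psi$.
\end{itemize}
Evenness, traces and determinants cannot tell an irreducible $\Ind_{K'}^K\psi$ from a reducible one such as $\Ind_{K'}^K(\chi_1|_{G_{K'}})\simeq\chi_1\oplus\chi_1\eta$. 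What separates them is whether $\psi$ extends to $G_K$.

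The missing idea is to find the possible inducing characters directly, by working over $K'$. The paper reruns \cref{alg:find_varepsilon} for $\rhoell|_{G_{K'}}$ at each surviving $K'$. It finds that every candidate $1$-dimensional constituent is the restriction of one of the characters of $G_K$ already found.

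Suppose $\rho\simeq\Ind_{K'}^K\psi$ were irreducible. Then $\rho|_{G_{K'}}^{ss}\simeq\psi\oplus\psi^\tau$, with $\psi$ valued in $\F_7^\times$ (since $\rho(G_{K'})$ lies in a split Cartan) and unramified outside $S\cup\{7\}$. So $\psi$ would survive that search, giving $\psi=\chi|_{G_{K'}}$ for some $\chi\colon G_K\to\F_7^\times$. But then $\rho\simeq\Ind_{K'}^K\Res_{K'}\chi\simeq\chi\oplus\chi\eta$ is reducible, a contradiction.

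Without this step, or some other way of pinning down $\psi$, your argument proves the theorem only for the $342$ curves on which \cref{alg:tracetest} eliminates every $d$.
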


\begin{proof}
We ran \cref{alg:tracetest} for all $381$ curves with $\ell=7$
and all Galois conjugate pairs (type 
\labelcref{item:conjandualpair}) of characters found by
\cref{alg:find_varepsilon} using Euler factors at the good 
primes $p < 2^{13}$. For every curve in the dataset, the set 
$S$ of bad primes is a subset of $\{2,3,5,7\}$, so $D$ has 
$15$ elements, i.e., there are $15$ quadratic extensions 
$K'=K(\sqrt{d})$ to be considered.

For $342$ of the $381$ curves, \cref{alg:tracetest} eliminates 
each of the $15$ possible values of $d$, thus ruling out 
alternative \labelcref{item:induced} of 
\cref{prop:inducedorsixchars}.

For the curve $C_5$ of \cref{ex:eightcharscurve} and the 
Galois-conjugate pair of characters having no compatible type 
\labelcref{item:4tuple} block, \cref{alg:tracetest} again 
eliminates all the $15$ possible values of $d$. Thus both 
alternatives of \cref{prop:inducedorsixchars} are ruled out in 
this case, which proves that this pair of characters is 
spurious.

For the remaining $39$ curves at least one $d$ survives: 
$d = 3$ for $34$ curves, $d = 7$ for $4$ curves and $d = 21$ 
for $3$ curves, with all three surviving for the single curve
$y^3 = 392x^4 + 112x^3 - 264x^2 + 72x + 18$.
In each of these $39$ cases, considering Euler factors at more primes 
would not help, and the surviving values of $d$ can be explained 
as follows. Write $\eta$ for the quadratic character of
$K'/K$, and $\chi_1,\chi_2,\chi_3,\chi_4$ for the four characters other
than $\varepsilon,\varepsilon^c$ in the six-character combination
returned by \cref{alg:find_varepsilon}. Then we find that, up to 
reindexing, $\chi_2 = \chi_1 \eta$ and $\chi_4 = \chi_3\eta$.

For each surviving extension $K'/K$, we repeat
\cref{alg:find_varepsilon} over $K'$. The resulting list of
candidate $1$-dimensional constituents of
$\rhoell|_{G_{K'}}^{ss}$ consists entirely of restrictions of the
characters already obtained over $K$.
Suppose that $\rho$ were irreducible. By
\cref{prop:inducedorsixchars}, we would have
$\rho\simeq\Ind_{K'}^K\psi$
for one of the surviving extensions $K'/K$. Restricting to $G_{K'}$
gives
$\rho|_{G_{K'}}^{ss}\simeq\psi\oplus\psi^\tau$,
where $\tau$ is the nontrivial element of $\Gal(K'/K)$. Hence $\psi$
must occur in the candidate list over $K'$, so
it must be the restriction of some character $\chi:G_K\to\F_7^\times$. 
But then $\rho \simeq\Ind_{K'}^K\Res_{K'}\chi \simeq\chi\oplus\chi\eta$,
contradicting the irreducibility of $\rho$.

Hence, under the standing \cref{hypothesis1,hypothesis2} (that 
$\rho_\lam$ is reducible and $f_p$ factors completely over $\Fell$ for 
every good prime $p$), this proves that $\rhoellKss$ is a direct sum of 
six characters.
The scripts implementing \cref{alg:tracetest} along with their 
output for all $381$ curves, are available at 
\cite{ReducibleRepsPicardGithub}.
\end{proof}

\begin{remark}
\label{rem:tracetest_conditional}
Making the result unconditional is same as the question of 
verifying that the obtained set of six candidate characters 
$G_K \to \Fell^\times$ all appear in $\rhoellKss$. This can be 
done by computing and verifying equality of Frobenius 
characteristic polynomials up to some large bound, coming 
from an effective Chebotarev density theorem. But this bound 
can be too big to carry out this computation, so we have not
attempted to do this.
\end{remark}

\subsection{The \texorpdfstring{$\ell=13$}{ell=13} examples}
\label{subsec:results_ell13}

For $\ell=13$, the argument of
\cref{subsec:results_tracetest} does not apply directly, since
$13\equiv1\pmod4$. We instead use the determinant and trace
computations of \cref{subsec:results_tracedetrho} to exhibit an
element of sufficiently large projective order and then apply
\cref{lem:projorder}.

The work \cite{CG26} found five Picard curves in Sutherland's
$7$-smooth Picard curve dataset such that the
$13$-torsion representation over $K=\Q(\zeta_3)$ is possibly 
reducible. They are all cubic twists of the curve:
\begin{equation}
\label{eq:nonsurj13curves}
  C_1' \colon y^3 = 243x^4 + 338x^3 - 147x^2 - 387x - 142.
\end{equation}
Since all of them have good reduction at all primes above $7$,
the cubic twists only involve primes up to $7$, and hence do not
affect the shape of the $13$-torsion representation. So it is
enough to consider just one of the curves.

We ran \cref{alg:find_varepsilon} with the Euler factors at 
good primes $p < 2^{13}$. For every good prime considered, the 
characteristic polynomial $f_p$ splits completely into linear 
factors over $\F_{13}$.
\cref{alg:find_varepsilon} returns six characters, occurring 
as three Galois conjugate pairs. Similar to the results for 
$\ell=7$ discussed in \cref{subsec:results_epsilon1}, these six 
characters form two blocks, one of type 
\labelcref{item:conjandualpair} and one of type 
\labelcref{item:4tuple}. So if $\rho_\lam$ is indeed reducible, 
the possibilities for the set of characters appearing in 
$\rhoellKss$ are: the type \labelcref{item:conjandualpair} 
block alone, or its union with the type \labelcref{item:4tuple} 
block. The character $\varepsilon$ appearing in the type 
\labelcref{item:conjandualpair} block is ramified at exactly 
one of the two primes above $13$.

Since $13 \not\equiv 3 \pmod 4$, \cref{lem:splittorusnormaliser} does
not apply here. We use \cref{lem:projorder} in its place, for 
which we need to exhibit an element of projective order at least
$6$ in the image of $\rho$. The order of $\rho(\Frob_\pp)$ in 
$\PGL_2(\F_{13})$ is the order of the ratio of
the two eigenvalues, which is determined by
$\tr(\rho(\Frob_\pp))^2/\det(\rho(\Frob_\pp))$. Note that this 
is insensitive to the sign ambiguity in the trace computation,
mentioned in \cref{prop:method2_returns_atmosttwotraces}. 
Among the $550$ good primes $\pp|p$ of $K$ of residue 
degree one with $p < 2^{12}$, exactly $190$ give 
$\rho(\Frob_\pp)$ of projective order 6. So both 
\cref{lem:projorder} and \cref{prop:inducedorsixchars} 
apply for this curve with $\ell = 13$.

Finally, we run \cref{alg:tracetest}. Since the set $S$ of bad
primes is contained in $\{2,3,5,7\}$, the set $D$ has $31$ elements 
for $\ell=13$. \cref{alg:tracetest} eliminates all $31$ 
corresponding quadratic extensions $K' = K(\sqrt{d})$ of $K$.

\begin{thm}
\label{thm:ell13_complete_decomposition}
Let $C$ be the Picard curve given in \cref{eq:nonsurj13curves}, and 
let $\ell=13$. 
Suppose that both \cref{hypothesis1,hypothesis2} hold.
Then $\rhoellKss$ is a direct sum of six characters.
\end{thm}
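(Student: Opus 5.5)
The plan is to apply \cref{prop:inducedorsixchars} and show that alternative \labelcref{item:induced} cannot occur. Assume \cref{hypothesis1,hypothesis2}, and suppose for contradiction that $\rho$ is irreducible. If so, \cref{prop:detrho_conjugatedual} forces $\varepsilon\varepsilon^c=\chi_{13}$. The only type \labelcref{item:conjandualpair} block returned by \cref{alg:find_varepsilon} then identifies $\varepsilon$ up to conjugation, since a type \labelcref{item:4tuple} block cannot supply $\varepsilon$. Because $13\equiv 1\pmod 4$, \cref{lem:splittorusnormaliser} is unavailable. We therefore verify the alternative hypothesis of \cref{prop:inducedorsixchars}: that $\overline G$ contains an element of order at least $6$.

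To produce such an element we use the computations of \cref{subsec:results_tracedetrho}. \cref{alg:find_det} gives $\det(\rho)$ up to conjugation, and \cref{alg:find_trace_1,alg:find_trace_2} give $\tr\rho(\Frob_\pp)$ up to sign. The invariant $\tr^2/\det$ is unaffected by the trace sign ambiguity of \cref{prop:method2_returns_atmosttwotraces}.

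\begin{itemize}
\item The remaining issue is the $\sigma$ versus $\sigma^c$ ambiguity of \cref{rem:twodetchars}. We handle it by noting that swapping $\sigma$ for $\sigma^c$ is the same as replacing $\pp$ by $\ppbar$, or $\rho$ by $\rho^c\simeq\psi\otimes\rho$. Either way the projective orders of the elements in $\overline G$ are unchanged.
\item If two conjugate pairs of determinant candidates survived, we would simply run the check for each candidate.
\end{itemize}

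The data give $190$ residue-degree-one primes at which $\rho(\Frob_\pp)$ has projective order $6$. By \cref{lem:projorder}, $G\subseteq N(T)$ for a split Cartan $T$, and the hypotheses of \cref{prop:inducedorsixchars} hold, because \cref{hypothesis2} supplies the splitting of $f_p$. Hence either $\rhoellKss$ is a sum of six characters, which is what we want, or $\rho\simeq\Ind_{K'}^K\psi$ with $K'=K(\sqrt d)$ and $d$ squarefree, supported on $S\cup\{3,13\}\subseteq\{2,3,5,7,13\}$.

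In the second case, $d$ and the squarefree part of $-3d$ give the same $K'$, so there are $31$ possible fields. For each one, \cref{prop:inducedorsixchars} predicts the following. At every split good prime $p$ with $\legendre{d}{p}=-1$, the quotient $f_p/(T-\varepsilon(\pp))(T-\varepsilon^c(\pp))$ is an even polynomial. Running \cref{alg:tracetest} on the primes $p<2^{13}$ finds, for each of the $31$ values of $d$, a prime where this evenness fails. This contradicts alternative \labelcref{item:induced}, so $\rho$ is reducible, and \cref{prop:inducedorsixchars}\labelcref{item:sixchars} gives the claim.

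One more point needs checking: the determinant and trace data used for the projective-order step must not rely on the unproven assumption that $\sigma$ is really $\det\rho$. I would argue that \cref{alg:find_det}, under \cref{hypothesis1} and irreducibility, is guaranteed to output $\det\rho$ or its conjugate, so the conclusion holds. I expect this rigour step, and not the finite computation itself, to be the main obstacle.
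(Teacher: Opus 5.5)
Your proposal is correct and follows the paper's route: you exhibit an element of projective order $6$ from the determinant and trace data (the paper finds $190$ of $550$ degree-one primes), invoke \cref{lem:projorder} and \cref{prop:inducedorsixchars}, and eliminate all $31$ fields $K(\sqrt d)$ with \cref{alg:tracetest}. Your extra remarks only make explicit what the paper leaves implicit: the $\sigma$ versus $\sigma^c$ ambiguity is harmless because $\rho^c$ has the same image as $\rho$, and \cref{alg:find_det} is guaranteed to output $\{\det\rho,\det\rho^c\}$ under the hypotheses (cf.\ \cref{rem:twodetchars}).
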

\begin{proof}
The computations above exhibit an element of projective order
$6$ in the image of $\rho$. Hence
\cref{lem:projorder,prop:inducedorsixchars} apply. The
$31$ quadratic extensions allowed by
\cref{prop:inducedorsixchars} are all eliminated by
\cref{alg:tracetest}. Thus the irreducible alternative is
impossible, and $\rhoellKss$ is a direct sum of six characters.
\end{proof}

\section{Further questions}
\label{sec:further_questions}

The main results of this paper are conditional on
\cref{hypothesis1,hypothesis2}: reducibility of
the $\lam$-torsion representation $\rho_\lam$ and 
splitting of all characteristic polynomials $f_p$ mod $\ell$. 
The computations give strong evidence for these hypotheses in the 
examples considered.
These results suggest the following more general question.

\begin{question}
Let $C/\Q$ be a Picard curve, let $\ell\equiv1\pmod3$, and suppose
that $\rho_\lam$ admits a $1$-dimensional subrepresentation for some
prime $\lam$ of $K$ above $\ell$. 
Can the complementary $2$-dimensional quotient be irreducible?
If so, what additional hypothesis will force reducibility?
\end{question}

\begin{question}
  If it is possible for the complementary subquotient to be 
  irreducible, can we produce such an example? Can its projective 
  image be non-dihedral?
\end{question}

Recall that by \cref{prop:descent}, if an absolutely irreducible 
$2$-dimensional quotient of $\rho_\lam$ exists, then it 
descends, up to twist, to a representation of $G_\Q$ 
whose parity is unconstrained. In that case:
\begin{question}
  Can we produce an example
  of a Picard curve $C/\Q$, where this parity is even?
\end{question}

The initial motivation behind this project was to 
produce new empirical evidence for Serre's modularity 
conjecture over $K=\Q(\zeta_3)$ through such irreducible
$2$-dimensional quotients. 
The first empirical evidence in \cite{Fig99} comes from considering 
restrictions to small imaginary quadratic fields of certain even 
$\SL_2(\F_q)$-valued representations of $G_\Q$ for small $q$. 
Figuieredo's method of producing such representations hinges 
on exploiting exceptional isomorphisms of the groups 
$\PSL_2(\F_q)$ and $\PGL_2(\F_q)$ with $A_n$ and $S_n$, 
and so works only for small prime powers $q$. 

On the other hand, reducible representations of Picard curves 
is a more natural source for $\GL_2(\F_\ell)$-valued 
representations of $K=\Q(\zeta_3)$.
When the curve is defined not over $\Q$, but only over $K$, 
the representations $\rho_\lam$ and $\rho_{\lambar}$ are no 
longer related by Galois conjugation. So whenever $\rho_\lam$
is reducible, one expects to obtain an irreducible $2$-dimensional
subquotient representation $\rho$. But there is no reason to expect it 
to descend to $\Q$.
There is a concrete roadmap to produce such representations: generate 
a dataset of $B$-smooth Picard curves over $K$ (and not over $\Q$) 
for a small $B$ by following \cite{Sut19}; identify the curves in the 
resulting dataset for which the $\lam$-torsion representation of the 
Jacobian is possibly reducible. The algorithms of \cite{CG26}
can already do the second step. The methods developed in 
\cref{subsec:varepsilon,subsec:detrho,subsec:tracerho} can then
determine Frobenius characteristic polynomials of the $2$-dimensional
subquotient $\rho$, which will be particularly useful for proving
its modularity.

Finally, the methods developed in this article work more generally for 
any abelian threefold whose geometric endomorphism algebra is 
isomorphic to an imaginary quadratic field $K$, as long as $\ell$ 
splits in $K$. It should be possible to extend the techniques 
to deal with inert primes. There are several such examples in 
\cite{CG26}, and they are good candidates for further exploration.

\printbibliography
\end{document}